\theoremstyle{theorem}
\newtheorem{thm}{Theorem}[section]
\theoremstyle{definition}
\newtheorem{dfn}[thm]{Definition}
\newtheorem{rem}[thm]{Remark}
\theoremstyle{theorem}
\newtheorem{lem}[thm]{Lemma}
\newtheorem{prop}[thm]{Proposition}
\newtheorem{cor}[thm]{Corollary}
\numberwithin{equation}{section}
\title{Some properties of grid homology for MOY graphs}
\author{Hajime Kubota}
\date{\today}
\begin{document}
\maketitle
\begin{abstract}
Grid homology for MOY graphs is immediately defined from grid homology for transverse spatial graphs developed by Harvey and O'Donnol \cite{spatial}.
We studied some properties of grid homology for MOY graphs such as the oriented skein relation, the effect of contracting an edge, and the effect of uniting parallel edges with the same orientation.
\end{abstract}

\section{Introduction}

Grid homology is the combinatorial link Floer homology developed by Manolescu, Ozsv\'{a}th, Szab\'{o}, Thurston \cite{oncombinatorial}.
Knot/link Floer homology is widely studied because it has various applications to low-dimensional topology.
For example, knot Floer homology detects the genus of a knot \cite{KFH-genus}.
Knot Floer homology is called the categorification of the Alexander polynomial since the graded Euler characteristic coincides with the Alexander polynomial.

Harvey and O'Donnol extended grid homology to transverse spatial graphs \cite{spatial}.
Roughly, a transverse spatial graph is a spatial graph with a separation for incoming edges and outgoing edges at each vertex.
For a transverse spatial graph $f\colon G\to S^3$, Harvey and O'Donnol defined the extended grid homology $HFG^-(f)$ as a relatively bigraded $\mathbb{F}[U_1,\dots, U_V]$-modules and $\widehat{HFG}(f)$ as a relatively bigraded $\mathbb{F}$-vector space, where $V$ is the number of vertices of $f$ and $\mathbb{F}=\mathbb{Z}/2\mathbb{Z}$.
A $relatively\ bigraded$ means that a module or vector space with absolute $\mathbb{Z}$-valued Maslov grading and relative $H_1(S^3-f(G))$-valued Alexander grading.
They  showed that the hat version of their grid homology is the combinatorial version of sutured Floer homology \cite[Theorem 6.6]{spatial}.
As a corollary, the graded Euler characteristic of their hat version coincides with the torsion invariant $T(E(f),\gamma(f))\in\mathbb{Z}[H_1(E(f))]$ of Friedl, Juh\'{a}sz, and Rasmussen \cite{decategorification-of-sutured-Floer}, where $(E(f),\gamma(f))$ is the sutured manifold determined by $f$.
Harvey and O'Donnol defined the Alexander polynomial for transverse spatial graphs as the torsion invariant $T(E(f),\gamma(f))$.

Harvey and O'Donnol also mentioned that their grid homology is the same as Heegaard Floer homology for some balanced bipartite graphs defined by Bao \cite{HF-bipartite} if the graphs have no sinks, sources, or cut edges.
For a transverse spatial graph $f$, we can take the balanced bipartite spatial graph $f'$ uniquely.
Because $f'$ is also a transverse spatial graph, we can take a (good) grid diagram $g'$ representing $f'$.
Bao pointed out that if we regard $g'$ as a special Heegaard diagram, her chain complex coincides with the grid chain complex of $g'$.

Roughly, transverse spatial graphs with a certain edge coloring which is called balanced coloring are called MOY graphs.
Combining balanced coloring with the extended grid homology for transverse spatial graphs, we can quickly define grid homology for MOY graphs, we denoted by $HF^-(f,\omega)$ or $\widehat{HF}(f,\omega)$.
The Alexander  grading of our grid homology takes values in $\mathbb{Z}$ because we can take the homomorphism $\omega\colon H_1(S^3-f(G))\to\mathbb{Z}$ determined by the balanced coloring of $f$.
We observed grid homology for MOY graphs and its graded Euler characteristic $\Delta_{(f,\omega)}(t)\in\mathbb{Z}[t^{\pm1}]$.

Our main goal is to study some properties of grid homology for MOY graphs such as the oriented skein relation, the effect of contracting an edge, and the effect of uniting parallel edges with the same orientation.

\subsection{MOY graphs}
An oriented spatial graph $f$ is the embedding image of an abstract directed graph in $S^3$.
A \textbf{transverse spatial graph} is an oriented spatial graph such that there is a small disk $D\subset S^3$ that separates the incoming edges and the outgoing edges for each vertex (Figure \ref{fig:transverse}).
In this paper, we always assume that every transverse spatial graph is sourceless and sinkless, which means that each vertex has both incoming edges and outgoing edges.

\begin{dfn}
\label{dfn:balanced}
We denote the set of edges of $f$ by $E(f)$ and the set of vertices of $f$ by $V(f)$.
For a vertex $v$, let $\mathrm{In}(v)$ be the set of edges incoming to $v$ and $\mathrm{Out}(v)$ be the set of edges outgoing to $v$.
\begin{enumerate}
    \item A \textbf{balanced coloring} $\omega$ for $f$ is a map $E(f)\to\mathbb{Z}$ satisfying $\sum_{e\in \mathrm{In}(v)}\omega(e)=\sum_{e\in \mathrm{Out}(v)}\omega(e)$ for each $v\in V(f)$
    \item An \textbf{MOY graph} $(f,\omega)$ is a transverse spatial graph with a balanced coloring.
\end{enumerate}

\end{dfn}

\begin{rem}
Mellor, Kong, Lewald, and Pigrish \cite{coloring} define a balanced spatial graph as a pair of a spatial graph and a balanced coloring, which is an MOY graph without the transverse condition.
It is confusing that Vance\cite{grid-tau} and the author paper\cite{grid-upsilon-concordance} defined a balanced spatial graph, which is a transverse spatial graph satisfying that the number of incoming edges equals the number of outgoing edges at each vertex.
Note that a balanced spatial graph of Vance and the author is a special case of an MOY graph.
\end{rem}

\subsection{Main results}
Let $W(i)$ be a two-dimensional graded vector space $W(i)\cong\mathbb{F}_{0,0}\oplus\mathbb{F}_{-1,-i}$, where $\mathbb{F}=\mathbb{Z}/2\mathbb{Z}$.
For a bigraded $\mathbb{F}$-vector space $X$, $X\llbracket a,b\rrbracket$ denotes a \textbf{shift} of a bigraded $\mathbb{F}$-vector space such that $X\llbracket a,b\rrbracket_{d,s}=X_{d+a,s+b}$. Then,
\[
X\otimes W(i)\cong X\oplus X\llbracket1,i\rrbracket.
\]
\begin{figure}
\centering
\includegraphics[scale=0.6]{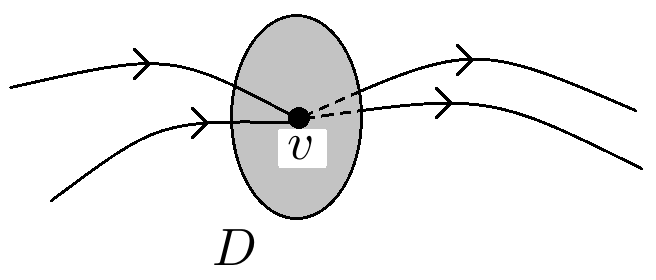}
\caption{A small disk at a vertex of a transverse spatial graph}
\label{fig:transverse}
\end{figure}

\begin{figure}
\centering
\includegraphics[scale=0.35]{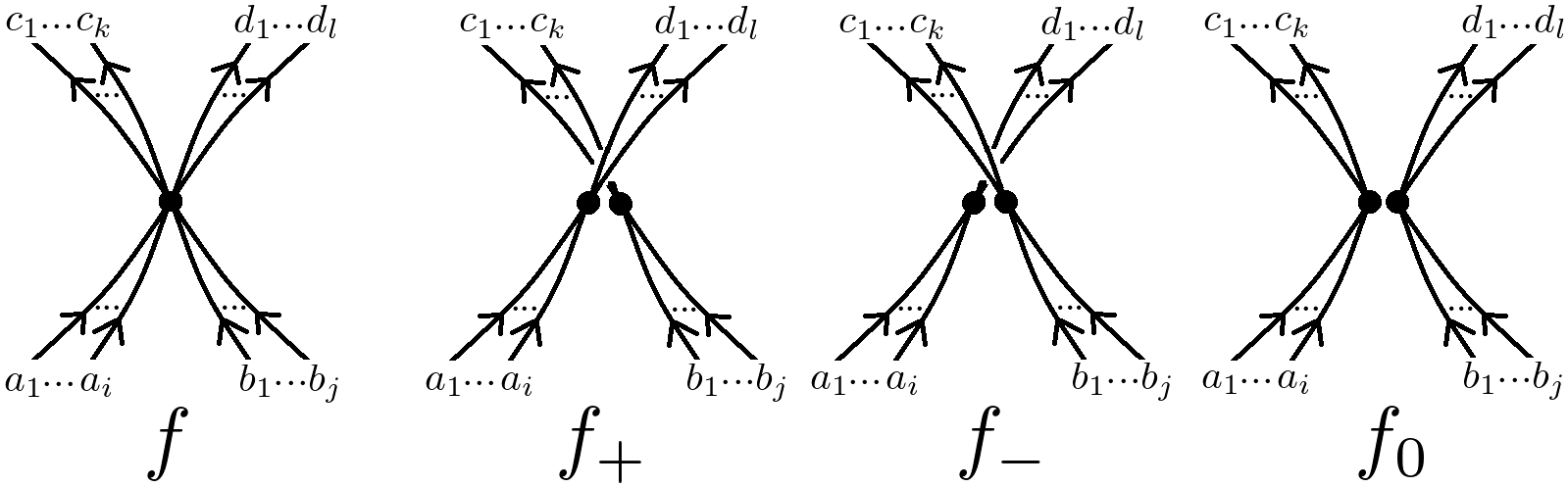}
\caption{An oriented skein triple of MOY graphs. Edges with the same label correspond to each other.}
\label{fig:triple1}
\end{figure}

\begin{dfn}
Let $f,f_+,f_-,f_0$ be four transverse spatial graphs as in Figure \ref{fig:triple1} and $\omega$ be a balanced coloring for $f$.
We call a 4-tuple $(f,f_+,f_0,\omega)$ or $(f,f_-,f_0,\omega)$ an \textbf{oriented skein triple of MOY graphs} if the balanced colorings of $f_+,f_-,f_0$ are naturally determined by $\omega$ and if $\sum_{s=1}^{i}\omega(a_s)=\sum_{s=1}^{j}\omega(b_s)=\sum_{s=1}^{k}\omega(c_s)=\sum_{s=1}^{l}\omega(d_s)$.
\end{dfn}

The following theorem implies the existence of exact sequences of grid homology $\widehat{HF}(f,\omega)$ for oriented skein triple of MOY graphs.
\begin{thm}\rm
\label{thm:main1}
(1) Suppose that $(f,f_+,f_0,\omega)$ is an oriented skein triple of MOY graphs as in Figure \ref{fig:triple1}.
Let $\omega_+,\omega_0$ be balanced colorings for $f_+,f_0$ determined by $\omega$ and let $m=\sum_{s=1}^{i}\omega(a_s)$.
Then, there is an exact triangle as absolute Maslov graded, relative Alexander graded $\mathbb{F}$-vector spaces;
\[
\xymatrix{
\widehat{HF}(f_0,\omega_0)\ar[rr]^-{(-1,0)}&\ar@{}@<0.8ex>[d]&\widehat{HF}(f_+,\omega_+)\ar[dl]^-{(1,0)}\\
&\widehat{HF}(f,\omega)\otimes W(m)\ar[ul]^-{(-1,0)}&
}
\]
(2) Suppose that $(f,f_-,f_0,\omega)$ is an oriented skein triple of MOY graphs as in Figure \ref{fig:triple1}.
Let $\omega_-,\omega_0$ be balanced colorings for $f_-,f_0$ determined by $\omega$ and let $m=\sum_{s=1}^{i}\omega(a_s)$.
Then, there is an exact triangle as absolute Maslov graded, relative Alexander graded $\mathbb{F}$-vector spaces;
\[
\xymatrix{
\widehat{HF}(f_-,\omega_-)\ar[rr]^-{(-1,0)}&\ar@{}@<0.8ex>[d]&\widehat{HF}(f_0,\omega_0)\ar[dl]^-{(1,0)}\\
&\widehat{HF}(f,\omega)\otimes W(m)\ar[ul]^-{(-1,0)}&
}
\]
\end{thm}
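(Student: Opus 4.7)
The plan is to mimic the combinatorial proof of the oriented skein exact triangle for link grid homology, adapted to the MOY setting built on Harvey--O'Donnol's grid diagrams for transverse spatial graphs \cite{spatial}. I describe the argument for part (1); part (2) follows by the same scheme after interchanging the roles of $f_+$ and $f_-$ and the direction of the arrows.

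First I would fix grid diagrams $G$, $G_+$, $G_0$ representing $f$, $f_+$, $f_0$ that agree outside one small region of the grid torus and differ only in the local arrangement of $X$/$O$-markings encoding the three local pictures of Figure \ref{fig:triple1}. Because $(f,f_+,f_0,\omega)$ is an oriented skein triple, the balanced colorings $\omega$, $\omega_+$, $\omega_0$ assign the same weights to corresponding edges outside the local region, and the identities
\[
m=\sum_{s=1}^{i}\omega(a_s)=\sum_{s=1}^{j}\omega(b_s)=\sum_{s=1}^{k}\omega(c_s)=\sum_{s=1}^{l}\omega(d_s)
\]
tell us that the families of markings entering and leaving the local region carry total Alexander weight $m$ on each side.

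Next I would define the three chain maps in the triangle by counting empty rectangles and pentagons supported in the local region, following the recipe used for knot and link grid complexes in \cite{oncombinatorial}. The map $\widehat{HF}(f_0,\omega_0)\to\widehat{HF}(f_+,\omega_+)$ is modeled on the pentagon-counting map associated with a crossing change; the map $\widehat{HF}(f_+,\omega_+)\to\widehat{HF}(f,\omega)\otimes W(m)$ is induced by forgetting the crossing structure and collapsing the two local strands to the vertex; and the map $\widehat{HF}(f,\omega)\otimes W(m)\to\widehat{HF}(f_0,\omega_0)$ comes from the opposite resolution. The tensor factor $W(m)$ arises because $G$ and $G_+$ carry different numbers of free $X$/$O$-markings in the local region, so a destabilization is required to compare their hat complexes; this destabilization contributes a $W$-summand whose Alexander parameter is forced by the weight bookkeeping to be exactly $m$. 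I would then check that the Maslov shifts of the three maps are $(-1,0)$, $(1,0)$ and $(-1,0)$ by an explicit formula on generators, combined with the shifts already established for the elementary grid moves in \cite{spatial}.

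The main difficulty is establishing exactness, which I would do by identifying one of the three chain complexes, for instance the one computing $\widehat{HF}(f,\omega)\otimes W(m)$, with the mapping cone of a composable pair of the other two maps. In the MOY setting the singular vertex of $f$ is $(i{+}j)$-to-$(k{+}l)$ valent rather than $2$-to-$2$, so the local combinatorics involves several rows and columns and more elaborate pentagon families than in the classical two-strand link case. However, the balance condition forces the Alexander contributions of these extra markings to cancel in matched pairs, so that the chain-level cone argument used in \cite{oncombinatorial} extends without fundamental obstruction. Because only the relative Alexander grading is asserted, I do not need to match absolute Alexander gradings across the three terms, which cuts down on the bookkeeping considerably. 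Part (2) follows from the same scheme applied to $(f,f_-,f_0,\omega)$.
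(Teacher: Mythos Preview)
Your proposal diverges from the paper's argument in two substantive ways, and at least one of them is a genuine gap rather than a stylistic difference.

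First, the paper does \emph{not} build the maps in the triangle by counting pentagons. Instead, it chooses grid diagrams $g$, $g_+$, $g_0$ so that $g_+$ and $g_0$ share the same underlying grid and differ only in the position of a single $2\times 2$ block, while $g$ is the one-smaller grid obtained by deleting the row and column through that block. One then splits the generators of $\widehat{CF}(g_+)$ (and of $\widehat{CF}(g_0)$) into those containing the corner point $c$ of the block and those not, obtaining $\widehat{CF}(g_+)\cong\mathrm{Cone}(\partial_I^N\colon I_+\to N_+)$ and $\widehat{CF}(g_0)\cong\mathrm{Cone}(\partial_N^I\colon N_0\to I_0)$. The natural identifications $I_+\cong\widehat{CF}(g)\cong I_0$ and $N_+\cong N_0$ then feed into the purely algebraic cone lemma (Lemma~A.3.10 of \cite{grid-book}) applied to the string $I_+\to N_+\to N_0\to I_0$, and the exact triangle drops out. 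All maps here are pieces of the ordinary rectangle-counting differential; no pentagon families enter. Your worry about ``more elaborate pentagon families'' forced by the high valence of the singular vertex is therefore misplaced: the extra $X$-markings sit outside the $2\times 2$ block and play no role in the local combinatorics.

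Second, your explanation of the $W(m)$ factor is incorrect. It does \emph{not} come from a destabilization comparing $g$ and $g_+$. In the paper's argument the key observation is that the triple composite $\partial_N^I\circ\alpha\circ\partial_I^N$ vanishes, because any composite domain is a width-one annulus through the block and hence contains an $X$-marking. Consequently the third cone in the algebraic lemma is $\mathrm{Cone}(0\colon I_+\to I_0)$, whose homology is $H(I_+)\oplus H(I_0)$. The bijections $c_+\colon\mathbf{S}(g)\to\mathbf{I}(g_+)$ and $c_0\colon\mathbf{I}(g_0)\to\mathbf{S}(g)$ have bidegrees $(0,m)$ and $(1,0)$ respectively, and it is precisely this pair of shifts that produces
\[
H(I_+)\oplus H(I_0)\;\cong\;\widehat{HF}(g,\omega)\llbracket 0,-m\rrbracket\oplus\widehat{HF}(g,\omega)\llbracket 1,0\rrbracket\;\cong\;\widehat{HF}(g,\omega)\otimes W(m)\llbracket 0,-m\rrbracket.
\]
So the Alexander parameter $m$ enters through the grading shift of the point-insertion map $c_+$, which in turn is computed directly from the weights of the markings in the block, not from any stabilization move. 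Without this vanishing-of-the-triple-composite step and the correct grading computation for $c_+$, your outline does not actually produce the $W(m)$ term.
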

\begin{dfn}
For a graph grid diagram $g$, let $\Delta_g(t)$ be the graded Euler characteristic
\[
\Delta_g(t)=\sum_{d,s\in\mathbb{Z}}(-1)^d\mathrm{dim}_\mathbb{F}\widehat{HF}(g,\omega)_{d,s}\cdot t^s.
\]
\end{dfn}
This is an invariant for transverse spatial graphs up to multiplication by $\pm t^{\pm n}$ because $\widehat{HF}(g)$ is independent of the choice of $g$ representing $f$, up to shift of Alexander grading.
We denote by $\Delta_{(f,\omega)}(t)$.

\begin{cor}
(1) Let $(f,f_+,f_0,\omega)$ be an oriented skein triple of MOY graphs and $m=\sum_{s=1}^{i}\omega(a_s)$.
Then, there is a pair of integers $(m_1,m_2)$ such that
\[
t^{m_1}\cdot\Delta_{(f_0,\omega_0)}(t)-t^{m_2}\cdot\Delta_{(f_+,\omega_+)}(t)\doteq(1-t^m)\cdot\Delta_{(f,\omega)}(t),
\]
where $\doteq$ means equal up to multiplication by $\pm t^{\pm n}$.

(2) Let $(f,f_-,f_0,\omega)$ be an oriented skein triple of MOY graphs and $m=\sum_{s=1}^{i}\omega(a_s)$.
Then, there is a pair of integers $(m_1,m_2)$ such that
\[
t^{m_1}\cdot\Delta_{(f_0,\omega_0)}(t)-t^{m_2}\cdot\Delta_{(f_-,\omega_-)}(t)\doteq(1-t^m)\cdot\Delta_{(f,\omega)}(t),
\]
\end{cor}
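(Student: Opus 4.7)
The plan is to obtain the corollary by taking graded Euler characteristics of the exact triangles provided by Theorem \ref{thm:main1}. An exact triangle of Maslov-graded vector spaces unrolls into a long exact sequence on the Maslov grading, and the alternating sum of dimensions at each Alexander level vanishes; passing to Poincar\'e polynomials then yields a linear relation among the three graded Euler characteristics of its vertices.

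First I would compute the graded Euler characteristic of the auxiliary factor. Since $W(m)\cong\mathbb{F}_{0,0}\oplus\mathbb{F}_{-1,-m}$, one has
\[
\chi(W(m))=1-t^{-m}\doteq 1-t^{m},
\]
so by multiplicativity of $\chi$ on tensor products,
\[
\chi\!\bigl(\widehat{HF}(f,\omega)\otimes W(m)\bigr)\doteq(1-t^{m})\,\Delta_{(f,\omega)}(t).
\]

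Next, I would apply the Euler-characteristic relation coming from the long exact sequence to the triangle in Theorem \ref{thm:main1}(1). Because the Alexander gradings at the three vertices are only \emph{relative}, to compare their Euler characteristics one must first pick absolute lifts at each vertex, and any such choice amounts to multiplying each $\Delta$ by a monomial $\pm t^{\pm n}$. The two Alexander-preserving arrows in the triangle then fix the relative shifts between the three lifts up to two free integers $m_1,m_2$, and the vanishing of the alternating sum becomes
\[
\pm t^{m_1}\Delta_{(f_0,\omega_0)}(t)\pm t^{m_2}\Delta_{(f_+,\omega_+)}(t)\pm(1-t^{m})\Delta_{(f,\omega)}(t)\doteq 0,
\]
where the overall signs are determined by the parities of the Maslov shifts $(-1,0),(1,0),(-1,0)$ around the triangle. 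Rearranging yields the claimed formula. Part (2) is identical, using the exact triangle of Theorem \ref{thm:main1}(2) in place of (1).

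The only delicate step is choosing compatible absolute lifts of the Alexander grading at the three vertices, but since the statement asserts only the existence of some $m_1,m_2$ and uses the weak equality $\doteq$, no explicit identification of these integers is required; the argument then reduces to formal Euler-characteristic bookkeeping on a long exact sequence, with the factor $(1-t^{m})$ produced entirely by $\chi(W(m))$.
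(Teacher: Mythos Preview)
Your proposal is correct and is exactly the argument the paper has in mind: the corollary is stated immediately after Theorem~\ref{thm:main1} without proof, so it is treated as the routine passage from an exact triangle to the vanishing of the alternating sum of graded Euler characteristics. Your identification of $\chi(W(m))\doteq 1-t^m$, the use of multiplicativity under tensor product, and the explanation that the free integers $m_1,m_2$ arise from the merely relative Alexander grading all match the paper's setup; nothing further is needed.
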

\begin{rem}
We need a pair of integers $(m_1,m_2)$ because $\Delta_{(f,\omega)}(t)$ is defined up to multiplication by $\pm t^m$, equivalently, $\widehat{HF}(f,\omega)$ is an invariant as relativity Alexander graded vector space.
\end{rem}

\begin{figure}
\centering
\includegraphics[scale=0.7]{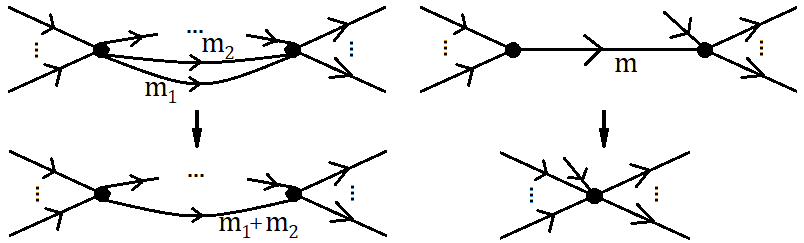}
\caption{Left: graphs in Theorem \ref{thm:main2}. Right: contracting an edge of Theorem \ref{thm:main3}}
\label{fig:para-cont}
\end{figure}

\begin{thm}
\label{thm:main2}
Let $(f,\omega)$ be an MOY graph.
Suppose that $f$ has two parallel edges $e_1,e_2$ with the same orientation.
Let $(f',\omega')$ be an MOY graph obtained by replacing $e_1,e_2$ with one edge whose weight is $\omega(e_1)+\omega(e_2)$ and the orientation of the new edge is the same as the orientation of $e_1,e_2$
(See the left part of Figure \ref{fig:para-cont}).
Let $V$ be the number of vertices of $f,f'$.
Then,
\begin{align*}
HF^-(f,\omega) &\cong HF^-(f',\omega'),\\
\widehat{HF}(f,\omega) &\cong\widehat{HF}(f',\omega'),
\end{align*}
as absolute Maslov graded, relative Alexander graded $\mathbb{F}[U_1,\dots,U_V]$-modules and as absolute Maslov graded, relative Alexander graded $\mathbb{F}$-vector spaces, respectively.
\end{thm}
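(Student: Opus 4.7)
The plan is to fix explicit grid diagrams $g$ for $f$ and $g'$ for $f'$ that differ only inside a small local region around the parallel edges, and to exhibit a graded chain homotopy equivalence $GC^-(g)\simeq GC^-(g')$ compatible with the $\mathbb{F}[U_1,\dots,U_V]$-module structure. Since the $U$-variables are indexed by vertices rather than edges, and uniting parallel edges does not change the vertex set, the module structure is formally the same on both sides, so the $\widehat{HF}$ statement will follow from the $HF^-$ statement by specializing the $U$-variables.

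First I would prepare $g$ by applying commutation moves so that the two $X$-markings at $v$ coming from $e_1,e_2$ occupy two adjacent rows, the two $O$-markings at $w$ coming from $e_1,e_2$ occupy two adjacent rows, and the two corresponding arcs are confined to a rectangular local region containing no other $X$ or $O$ markings. Then I would define $g'$ by deleting one such row and one matching column, merging the two parallel arcs into a single arc realizing the new edge $e$ of $f'$. Generators of $GC^-(g)$ split according to occupancy in the deleted row and column, and a cancellation argument modeled on the grid stabilization invariance proof of \cite{spatial} shows that one half of the split is an acyclic subcomplex, while the other is identified with $GC^-(g')$ by the obvious bijection. The rectangle count matches because the isolated local region contains no other markings, so every rectangle of $GC^-(g)$ either avoids the deleted row and column or is one of the few local rectangles absorbed by the cancellation.

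Finally I would verify that the Maslov and Alexander gradings transport correctly under this identification. The Maslov shift from the cancellation matches the standard destabilization formula, and for the $\omega$-graded Alexander grading the contributions of $e_1$ and $e_2$ to any generator are computed as weighted winding numbers that, by additivity and the defining equality $\omega'(e)=\omega(e_1)+\omega(e_2)$, sum to the contribution of $e$ in $g'$. The main obstacle is the combinatorial verification of the cancellation in the graph setting, where vertex markings and weighted winding numbers must be tracked carefully; this is the same sort of local check carried out for the invariance moves in \cite{spatial}, and the isolation of the local region achieved in the first step is what makes the verification feasible.
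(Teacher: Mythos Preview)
Your overall strategy---arrange the parallel edges in a local block and run a destabilization-type argument---is exactly the paper's approach: the paper packages this as Proposition~\ref{prop:desta} (a ``destabilization$'$-like'' move with $A=O$) and then invokes it directly. So the architecture is right.

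There are, however, two genuine gaps in what you wrote.

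First, the sentence ``one half of the split is an acyclic subcomplex, while the other is identified with $GC^-(g')$'' is not how the $CF^-$ argument goes. When you split $CF^-(g)=I\oplus N$ according to whether a state contains the distinguished intersection point $c$, neither piece is acyclic: $I$ is identified with $CF^-(g')[U_{n+1}]$ (note the extra polynomial variable coming from the $O$-marking you are removing), and $N$ is chain-homotopy-equivalent to the same thing via an explicit operator $\mathcal{H}^-_X$. The point is that $CF^-(g)\cong\mathrm{Cone}(\partial_I^N)$ is then quasi-isomorphic to $\mathrm{Cone}\bigl(U_{n+1}-U_n\colon CF^-(g')[U_{n+1}]\to CF^-(g')[U_{n+1}]\bigr)$, and it is the passage to this cone that kills the extra variable and returns $HF^-(g')$. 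A naive ``acyclic half'' picture would lose track of $U_{n+1}$ and would not give a statement over $\mathbb{F}[U_1,\dots,U_V]$. Relatedly, your remark that ``the module structure is formally the same on both sides'' is only true on homology; at the chain level $g$ has one more $U$-variable than $g'$, and handling it is the content of the argument.

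Second, you implicitly assume the two parallel edges join distinct vertices $v\neq w$. If $e_1,e_2$ are loops at a single vertex, you cannot place the configuration in the clean $2\times2$ block your argument needs. The paper treats this case separately: it first applies Theorem~\ref{thm:main3} to split the vertex along an auxiliary edge $e_0$, so that the loops become honest parallel edges between two vertices, runs the non-loop case, and then contracts $e_0$ again. You should add this reduction.
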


\begin{thm}
\label{thm:main3}
Let $(f,\omega)$ be an MOY graph.
Let $e$ be an edge of $f$ which is not a loop.
Suppose that at either of its endpoints, $e$ is the only outgoing edge or incoming edge.
Let $(f',\omega')$ be an MOY graph such that $f'$ is obtained from $f$ by contracting $e$ and $\omega'$ is naturally determined by $\omega$
(See the right part of Figure \ref{fig:para-cont}).
Let $V$ be the number of vertices of $f'$.
Then,
\begin{align*}
HF^-(f,\omega) &\cong HF^-(f',\omega'),\\
\widehat{HF}(f,\omega) &\cong\widehat{HF}(f',\omega')\otimes W(\omega(e)).
\end{align*}
as absolute Maslov graded, relative Alexander graded $\mathbb{F}[U_1,\dots,U_V]$-modules and as absolute Maslov graded, relative Alexander graded $\mathbb{F}$-vector spaces, respectively.
\end{thm}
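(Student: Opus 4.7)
The plan is to realize the edge contraction as a destabilization move on graph grid diagrams, in the spirit of the invariance arguments of Harvey--O'Donnol \cite{spatial}. Without loss of generality, assume $u$ is the endpoint of $e = (u,v)$ at which $e$ is the only outgoing edge; the dual case where $v$ is the only incoming endpoint is symmetric (for example, by reversing orientations). The balanced coloring condition at $u$ then yields the identity $\omega(e) = \sum_{e' \in \mathrm{In}(u)} \omega(e')$, which will drive the Alexander grading computation.

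First, I would choose a graph grid diagram $g$ representing $(f,\omega)$ in which the edge $e$ appears in a canonical destabilizable position. Because $e$ is the sole outgoing edge at $u$, the vertex $u$ contributes a single $O$-marking $O_e$ to $g$, while the vertex $v$ contributes among its $X$-markings a unique $X_e$ associated to $e$. Using the standard commutation and cyclic permutation moves from Harvey--O'Donnol's invariance theorem, I can arrange that $O_e$ and $X_e$ sit in a local configuration to which the destabilization move applies, with no other markings in the affected row and column apart from those forced by the vertex structures at $u$ and $v$. Deleting the relevant row and column then produces a graph grid diagram $g'$ for $(f',\omega')$: the incoming edges at $u$ are now attached directly into the vertex at $v$, producing the contracted vertex, and the resulting edge weights agree with $\omega'$.

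With this setup, I would apply the destabilization argument on the chain level, adapted to the transverse spatial graph framework. Concretely, $GC^-(g)$ can be written as a mapping cone of a chain map between two complexes, each quasi-isomorphic to $GC^-(g')$; acyclicity of one summand yields a chain homotopy equivalence $GC^-(g) \simeq GC^-(g')$ as $\mathbb{F}[U_1,\dots,U_V]$-modules, with the $U$-actions associated to $u$ and $v$ identified under the equivalence. Passing to homology gives the minus-version isomorphism. For the hat version one sets a distinguished $U$-variable in each component to zero; the $O$-marking $O_e$ that is removed in the destabilization contributes an extra variable on the $g$-side whose killing produces an additional tensor factor of the shape $\mathbb{F}_{0,0}\oplus \mathbb{F}_{-1,-\omega(e)} = W(\omega(e))$.

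The main obstacle will be verifying the Alexander shift $-\omega(e)$ in the second summand of $W(\omega(e))$. The Maslov shift $-1$ is a standard byproduct of destabilization, but the Alexander shift requires computing the net change in the Harvey--O'Donnol Alexander grading formula across the move. This is where the balanced coloring enters essentially: the $X$-marking $X_e$ carries Alexander weight $\omega(e)$, while the contribution from $O_e$ at $u$ balances against the incoming $X$-weights at $u$ via $\omega(e) = \sum_{e' \in \mathrm{In}(u)} \omega(e')$. Tracking these contributions carefully through the destabilization map should produce precisely the shift $-\omega(e)$ and complete the identification with $W(\omega(e))$.
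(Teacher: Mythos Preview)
Your overall strategy matches the paper's: realize the edge contraction as a destabilization-type move (the paper packages this as Proposition~\ref{prop:desta}, applied with $A=O^*$), decompose $CF^-(g,\omega)$ as a mapping cone according to whether states contain the destabilization point $c$, and for the hat version track the extra $U$-variable attached to the removed $O^*$-marking. One step in your sketch would fail as written, however: you say $GC^-(g)$ is the cone of a map between two complexes ``each quasi-isomorphic to $GC^-(g')$'' and then invoke ``acyclicity of one summand,'' but these two assertions contradict each other and in fact neither summand is acyclic. The actual mechanism (as in the paper, following \cite[Section~5.2]{grid-book}) is that both $I$ and $N$ are identified with $CF^-(g',\omega')[U_{n+1}]$ via the bijection $c$ and the homotopy equivalence $\mathcal{H}^-_X$, and under these identifications $\partial_I^N$ becomes multiplication by $U_{n+1}-U_n$ (or just $U_{n+1}$); this map is injective with cokernel $CF^-(g',\omega')$, and that is what collapses the cone. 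For the hat version the paper proceeds exactly as you outline, citing Lemma~4.21 of \cite{spatial} (the extra variable $U_{n+1}$ is null-homotopic on $CF^-(g,\omega)/(U_1=\cdots=U_V=0)$) and Lemma~4.31 of \cite{spatial} to extract the tensor factor $W(\omega(e))$; those lemmas absorb the Alexander-shift bookkeeping you flag as the main obstacle.
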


\section{Grid homology for MOY graphs}
\label{sec:background}

\subsection{Background in bigraded chain complexes}
\begin{dfn}
Let $\mathbb{F}=\mathbb{Z}/2\mathbb{Z}$.
\begin{itemize}
    \item A \textbf{bigraded $\mathbb{F}[U_1,\dots,U_n]$-module} $M$ is an $\mathbb{F}[U_1,\dots,U_n]$-module which has a splitting $M=\bigoplus_{d,s\in \mathbb{Z}}M_{d,s}$ as a $\mathbb{F}$-module so that $V_i$ sends $M_{d,s}$ to $M_{d-2,s-\omega(i)}$ for all $i=1,\dots,n$, where $\omega(i)$'s are some integers.
    \item A \textbf{bigraded $\mathbb{F}[U_1,\dots,U_n]$-module homomorphism} $f\colon M\rightarrow M'$ is a homomorphism between two bigraded $\mathbb{F}[U_1,\dots,U_n]$-modules which sends $M_{d,s}$ to $M'_{d,s}$ for all $d,s\in\mathbb{Z}$.
    \item $\mathbb{F}[U_1,\dots,U_n]$-module homomorphism $f\colon M\rightarrow M'$ is a \textbf{homogeneous of degree $(m,t)$} if it sends $M_{d,s}$ to $M'_{d+m,s+t}$ for all $d,s\in\mathbb{Z}$.
    \item $(C,\partial)$ is a \textbf{bigraded chain complex over} $\mathbb{F}[U_1,\dots,U_n]$ if $C$ is bigraded $\mathbb{F}[U_1,\dots,U_n]$-module and if $\mathbb{F}[U_1,\dots,U_n]$-module homomorphism $\partial\colon C\rightarrow C$ satisfies $\partial\circ\partial=0$ and is homogeneous of degree $(-1,0)$.
\end{itemize}
\end{dfn}
Throughout this paper, we often consider chain maps, chain homotopy equivalences, etc. as bigraded $\mathbb{F}[U_1,\dots, U_n]$-module homomorphisms.

\subsection{The definition of grid homology for MOY graphs}
This section provides an overview of grid homology for MOY graphs.
Roughly, the Alexander grading of our grid homology is obtained by taking homomorphism $\omega\colon H_1(S^3-f(G))\to\mathbb{Z}$ determined by the balanced coloring and collapsing $H_1(S^3-f(G))$-valued Alexander grading of Harvey and O'Donnol into $\mathbb{Z}$.
So the almost definitions are the same as those of Harvey and O'Donnol.

A \textbf{planar graph grid diagram} $g$ is a $n\times n$ grid of squares some of that are decorated with an $X$- or $O$- (sometimes $O{}^*$-) markings with the following conditions.
\begin{enumerate}[(i)]
\item There is just one $O$ or $O{}^*$ on each row and column.
\item There is at least one $X$ on each row and column.
\item $O$'s (or $O{}^*$'s) and $X$'s do not share the same square.
\end{enumerate}
We denote the set of $O$-markings and $O^*$-markings by $\mathbb{O}$ and the set of $X$-markings by $\mathbb{X}$ and use the labeling of markings as $\{O_i\}_{i=1}^n$ and $\{X_j\}_{j=1}^m$.

Graph grid diagrams realize spatial graphs by connecting the $O$- (or $O^*$)- markings and $X$-markings by horizontal or vertical segments in each row and column, and assuming that the vertical segments always cross above the horizontal segments.
$O^*$-markings correspond to vertices, $O$- and $X$-markings to interior of edges of a transverse spatial graph.

Harvey and O'Donnol showed that any two graph grid diagrams representing the same spatial graph are connected by a finite sequence of the graph grid moves.
The graph grid moves are the following three moves$\colon$
\begin{itemize}
\item \textbf{Cyclic permutation} (Figure \ref{fig:cyccomm}) permuting the rows or columns cyclically.
\item \textbf{Commutation'} (Figure \ref{fig:cyccomm}) permuting two adjacent columns satisfying the following condition; vertical line segments $\textrm{LS}_1,\textrm{LS}_2$ on the torus such that (1) $\mathrm{LS}_1\cup\mathrm{LS}_2$ contains all the $X$'s and $O$'s in the two adjacent columns, (2) the projection of $\mathrm{LS}_1\cup\mathrm{LS}_2$ to a single vertical circle $\beta_i$ is $\beta_i$
, and (3) the projection of their endpoints, $\partial(\mathrm{LS}_1)\cup\partial(\mathrm{LS}_2)$, to a single $\beta_i$ is precisely two points. Permuting two rows is defined in the same way.
\item \textbf{(de-)stabilization'} (Figure \ref{fig:sta'}) let $g$ be an $n\times n$ graph grid diagram and choose an $X$-marking. Then $g'$ is called a stabilization' of $g$ if it is an $(n+1)\times(n+1)$ graph grid diagram obtained by adding a new row and column next to the $X$-marking of $g$, moving the $X$-marking to next column, and putting new one $O$-marking just above the $X$-marking and one $X$-marking just upper left of the $X$-marking. The inverse of stabilization is called destabilization.
\end{itemize}

\begin{figure}
\centering
\includegraphics[scale=0.5]{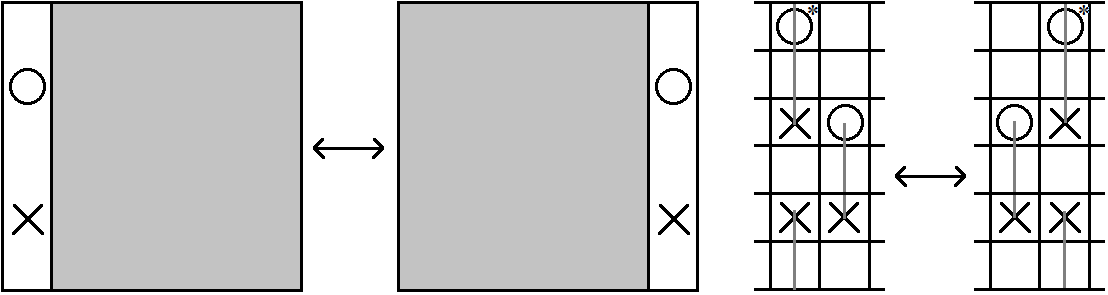}
\caption{Cyclic permutation and commutation', gray lines are $\mathrm{LS}_1$ and $\mathrm{LS}_2$}
\label{fig:cyccomm}
\includegraphics[scale=0.5]{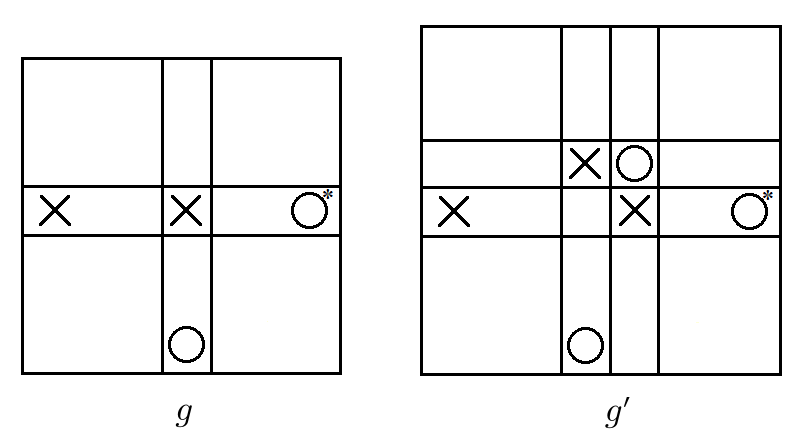}
\caption{Stabilization'}
\label{fig:sta'}

\includegraphics[scale=0.6]{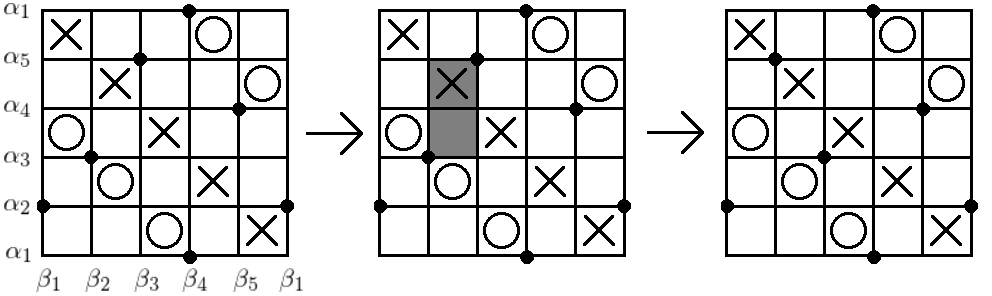}
\caption{An example of a state and a rectangle}
\label{fig:state}
\end{figure}

A \textbf{toroidal graph grid diagram} is a graph grid diagram that we think it as a diagram on the torus obtained by identifying edges in a natural way.
We write the horizontal and vertical circles, which separate squares as $\boldsymbol{\alpha}=\{\alpha_i\}_{i=1}^n$ and $\boldsymbol{\beta}=\{\beta_j\}_{j=1}^n$.
We assume that every toroidal diagram is oriented naturally.

A \textbf{state} $\mathbf{x}$ of $g$ is a bijection $\boldsymbol{\alpha}\rightarrow\boldsymbol{\beta}$.
We denote by $\mathbf{S}(g)$ the set of states of $g$.
We describe a state as $n$ points on the graph grid diagram (Figure \ref{fig:state}).

For $\mathbf{x,y}\in S$(g), a \textbf{domain} $\psi$ from $\mathbf{x}$ to $\mathbf{y}$ is a formal sum of the closure of squares, which is satisfying the following conditions;
\begin{itemize}
    \item $\psi$ is divided by $\boldsymbol{\alpha}\cup\boldsymbol{\beta}$
    \item $\partial(\partial_\alpha\psi)=\mathbf{y}-\mathbf{x}$ and $\partial(\partial_\beta\psi)=\mathbf{x}-\mathbf{y}$, where $\partial_\alpha\psi$ is the portion of the boundary of $\psi$ in the horizontal circles $\alpha_1\cup\dots\cup\alpha_n$ and $\partial_\beta\psi$ is the portion of the boundary of $\psi$ in the vertical ones.
\end{itemize}
A domain $\psi$ is \textbf{positive} if the coefficient of any square is nonnegative.
Here, we always assume that any domain is positive.
Let $\pi(\mathbf{x,y})$ denote the set of positive domains from $\mathbf{x}$ to $\mathbf{y}$.

Consider $\mathbf{x,y\in S}(g)$ satisfying that $|\mathbf{x\cap y}|=n-2$.
An \textbf{rectangle} $r$ from $\mathbf{x}$ to $\mathbf{y}$ is a domain that satisfies that $\partial r$ is the union of four segments.
A rectangle $r$ is \textbf{empty} if $\mathbf{x}\cap\mathrm{Int}(r)=\mathbf{y}\cap\mathrm{Int}(r)=\phi$.
Let $\mathrm{Rect}^\circ(\mathbf{x,y})$ be the set of empty rectangles from $\mathbf{x}$ to $\mathbf{y}$.

\begin{dfn}
For a graph grid diagram $g$ representing $f$ with balanced coloring $\omega$, a \textbf{weight} $\omega_g\colon\mathbb{O}\cup\mathbb{X}\to\mathbb{Z}$ is a map naturally determined by $\omega$ as follows;
\begin{itemize}
    \item $\omega_g(O_i)=\omega(e)$ if $O_i$ corresponds to the interior of the edge $e$.
    \item $\omega_g(X_j)=\omega(e)$ if $X_j$ corresponds to the interior of the edge $e$.
    \item $\omega_g(O_i)=\sum_{e\in\mathrm{In}(v)}\omega(e)=\sum_{e\in\mathrm{Out}(v)}\omega(e)$ if $O_i$ corresponds to the vertex $v$, where $\mathrm{In}(v)$ is the set of edges that go into $v$ and $\mathrm{Out}(v)$ is the set of edges that go out of $v$.
\end{itemize}
We abbreviate $\omega_g$ to $\omega$ as long as there is no confusion.
\end{dfn}

The grid chain complex $(CF^-(g,\omega),\partial^-)$ is a module over $\mathbb{F}[U_1,\dots,U_n]$ freely generated by $\mathbf{S}(g)$ and the $U_i$'s are formal variables corresponding to the $O_i$'s in $g$.
The differential $\partial^-$ is defined as counting empty rectangles by
\[
\partial^-(\mathbf{x})=\sum_{\mathbf{y}\in\mathbf{S}(g)}\left(
\sum_{\{r\in \mathrm{Rect}^\circ(\mathbf{x,y})|r\cap\mathbb{X}=\phi\}}
U_1^{O_1(r)}\cdots U_n^{O_n(r)}
\right)\mathbf{y},
\]
where $O_i(r)=1$ if $r$ contains $O_i$ and $O_i(r)=0$ otherwise for $i=1,\dots,n$. 

There are two gradings for $CF^-(g)$, the \textbf{Maslov grading} and the \textbf{Alexander grading}.
A planar realization of toroidal diagram $g$ is a planar figure obtained by cutting the toroidal diagram $g$ along $\alpha_i$ and $\beta_j$ for some $i$ and $j$, and putting on $[0,n)\times[0,n)\in\mathbb{R}^2$ in a natural way.
For two points $(a_1,a_2),(b_1,b_2)\subset\mathbb{R}^2$, let $(a_1,a_2)<(b_1,b_2)$ if $a_1<b_1$ and $a_2<b_2$.
For two sets of finitely points $A,B\subset\mathbb{R}^2$, let $\mathcal{I}(A,B)$ be the number of pairs $a\in A,b\in B$ with $a<b$ and let $\mathcal{J}(A,B)=(\mathcal{I}(A,B)+\mathcal{I}(B,A))/2$.
\begin{dfn}
For $\mathbf{x\in S}(g)$, the Maslov grading $M(\mathbf{x})$ and the Alexander grading $A(\mathbf{x})$ are defined by
\begin{align}
M(\mathbf{x})&=\mathcal{J}(\mathbf{x}-\mathbb{O},\mathbf{x}-\mathbb{O})+1,
\label{mm}
\\
A(\mathbf{x})&=\mathcal{J}(\mathbf{x},\sum_{j=1}^m\omega(X_j)\cdot X_j-\sum_{i=1}^n\omega(O_i)\cdot O_i).
\label{aa}
\end{align}
These two gradings are extended to the whole of $CF^-(g,\omega)$ by
\begin{align}
\label{uu}
M(U_i)=-2,\ A(U_i)=-\omega(O_i)\ (i=1,\dots,n).
\end{align}
\end{dfn}
Alexander grading is defined as counting pairs with multiplying weights of markings.
\begin{rem}
Our Alexander grading is obtained by taking the homomorphism $H_1(S^3-f(G))\to\mathbb{Z}$, which sends the meridian of each edge to the assigned integer determined by $\omega$, and the rest of the definitions are the same as the definitions of Harvey and O'Donnol.
So the invariance of our grid homology is naturally inherited from the arguments of Harvey and O'Donnol.
\end{rem}
Also, note that the Alexander grading is not well-defined as a toroidal diagram, however, relative Alexander grading $A^{rel}(\mathbf{x,y})=A(\mathbf{x})-A(\mathbf{y})$ is well-defined.
It is easy to verify that the differential $\partial^-$ drops Maslov grading by one and preserves Alexander grading, using the same argument such as Proposition 4.18\cite{spatial} and Proposition 3.7,\cite{grid-tau}.
So we can get this:
\begin{prop}
$(CF^-(g,\omega),\partial^-)$ is an absolute Maslov graded, relative Alexander graded chain complex.
\end{prop}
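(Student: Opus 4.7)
The plan is to establish three facts: $\partial^-\circ\partial^-=0$, that $\partial^-$ lowers Maslov grading by one, and that $\partial^-$ preserves relative Alexander grading. The first is purely local and independent of the weight function $\omega$: each contribution to $(\partial^-)^2(\mathbf{x})$ comes from a domain that decomposes as a composition of two empty $\mathbb{X}$-avoiding rectangles in exactly two ways, so contributions cancel modulo $2$. I would invoke this verbatim from \cite[Proposition 4.18]{spatial}, since the decomposition argument does not see the weights.

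For the Maslov shift, I would fix an empty rectangle $r\in\mathrm{Rect}^\circ(\mathbf{x},\mathbf{y})$ disjoint from $\mathbb{X}$ and prove the standard rectangle identity
\[
M(\mathbf{x})-M(\mathbf{y})=1-2\sum_{i=1}^n O_i(r)
\]
by a direct $\mathcal{J}$-calculation at the two corners of $r$ where $\mathbf{x}$ and $\mathbf{y}$ differ. Combined with $M(U_i)=-2$ from (\ref{uu}), this shows that the term $\bigl(\prod_i U_i^{O_i(r)}\bigr)\mathbf{y}$ of $\partial^-\mathbf{x}$ sits in Maslov grading $M(\mathbf{x})-1$. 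The computation only uses $\mathbb{O}$ and is identical to the one in \cite[Proposition 4.18]{spatial}.

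For the Alexander shift, the same rectangle computation applied to the weighted formal sum $\sum_j\omega(X_j)X_j-\sum_i\omega(O_i)O_i$ appearing in (\ref{aa}) yields
\[
A(\mathbf{x})-A(\mathbf{y})=\sum_{i=1}^n\omega(O_i)\,O_i(r)-\sum_{j=1}^m\omega(X_j)\,X_j(r).
\]
The $\mathbb{X}$-sum vanishes because $r\cap\mathbb{X}=\emptyset$, and then $A(U_i)=-\omega(O_i)$ from (\ref{uu}) exactly cancels the $\mathbb{O}$-sum, so the Alexander shift is $0$. The only non-routine input is the weighted $\mathcal{J}$-identity for rectangles; since $\mathcal{J}$ is bilinear, it follows from the unweighted version of \cite[Proposition 3.7]{grid-tau} by linear extension in each marking. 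I expect no real obstacle here, only careful bookkeeping of the integer weights $\omega(O_i)$ and $\omega(X_j)$.
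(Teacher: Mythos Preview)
Your proposal is correct and follows essentially the same approach as the paper: the paper does not give a formal proof block but simply remarks, just before the proposition, that the claim is easy to verify ``using the same argument such as Proposition 4.18 \cite{spatial} and Proposition 3.7 \cite{grid-tau}''---precisely the two references you invoke for $\partial^-\circ\partial^-=0$, the Maslov shift, and the weighted Alexander shift. Your write-up merely unpacks what those cited arguments say, which is appropriate.
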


Suppose that $O$-markings are labeled so that $O_1,\dots,O_V$ are $O^*$-markings and $O_{V+1},\dots,O_n$ are $O$-markings.
Let $\mathcal{U}$ be the minimal subcomplex of $CF^-(g,\omega)$ containing $U_1CF^-(g,\omega)\cup\dots\cup U_VCF^-(g,\omega)$.
$(\widehat{CF}(g,\omega),\widehat{\partial})$ is an absolute Maslov graded, relative Alexander graded chain complex over $\mathbb{F}$-vector space obtained by letting $\widehat{CF}(g,\omega)=CF^-(g,\omega)/\mathcal{U}$ and $\widehat{\partial}$ be the map induced by $\partial^-$.
\begin{dfn}
$HF^-(g,\omega)$ and $\widehat{HF}(g,\omega)$ are the homology of $(CF^-(g,\omega),\partial^-)$ and $(\widehat{CF}(g,\omega),\widehat{\partial})$, respectively.
\end{dfn}
Note that $HF^-(g,\omega)$ is regarded as an absolute Maslov graded, relative Alexander graded $\mathbb{F}[U_1,\dots,U_V]$-module, and $\widehat{HF}(g,\omega)$ is an absolute Maslov graded and relative Alexander graded $\mathbb{F}$-vector space.

\begin{prop}
$HF^-(g,\omega)$ and $\widehat{HF}(g,\omega)$ are invariants for transverse balanced-colored spatial graphs.
\end{prop}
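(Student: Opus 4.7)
The plan is to deduce the statement directly from the invariance results of Harvey and O'Donnol for $HFG^-$ and $\widehat{HFG}$ by pushing their $H_1(S^3-f(G))$-valued Alexander grading forward to $\mathbb{Z}$ along the homomorphism determined by $\omega$.

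First, I would check that a balanced coloring $\omega\colon E(f)\to\mathbb{Z}$ induces a well-defined group homomorphism
\[
\omega_*\colon H_1(S^3-f(G))\longrightarrow \mathbb{Z}
\]
sending the meridian $\mu_e$ of each edge $e$ to $\omega(e)$. The only relations among the $\mu_e$'s come from linking with a small sphere around each vertex, which gives $\sum_{e\in\mathrm{In}(v)}\mu_e=\sum_{e\in\mathrm{Out}(v)}\mu_e$; the balanced condition in Definition \ref{dfn:balanced} is exactly what makes $\omega_*$ respect this relation. Once $\omega_*$ is in hand, one verifies by comparing formulas that $CF^-(g,\omega)$ is obtained from the Harvey--O'Donnol grid chain complex $CFG^-(g)$ by composing its Alexander grading with $\omega_*$: the underlying $\mathbb{F}[U_1,\dots,U_n]$-module and the differential are identical, the Maslov grading is unchanged, and the identity on $\mathbb{O}\cup\mathbb{X}$ shows that the definition of $A(\mathbf x)$ in \eqref{aa} and of $A(U_i)$ in \eqref{uu} is exactly $\omega_*$ applied to the corresponding Harvey--O'Donnol Alexander grading.

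Next, I would invoke the main invariance theorem of Harvey and O'Donnol, which produces, for each of the three grid moves (cyclic permutation, commutation', (de)stabilization'), a chain homotopy equivalence between the corresponding grid chain complexes that preserves the absolute Maslov grading and the $H_1(S^3-f(G))$-valued Alexander grading (up to a global shift in the stabilization case). Applying $\omega_*$ to the Alexander component turns each such equivalence into a chain homotopy equivalence of our complexes $CF^-(g,\omega)$ that preserves the absolute Maslov grading and the relative $\mathbb{Z}$-valued Alexander grading; the global shift in $H_1$ produced by stabilization becomes a global integer shift of $A$, which is exactly what ``relative Alexander grading'' allows. This gives invariance of $HF^-(g,\omega)$ as an absolute Maslov graded, relative Alexander graded $\mathbb{F}[U_1,\dots,U_V]$-module.

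Finally, for the hat version I would observe that each Harvey--O'Donnol chain equivalence carries the subcomplex $\mathcal U$ generated by $U_1,\dots,U_V$ into the corresponding subcomplex on the other side, since the $O^*$-markings label the vertices and are permuted correspondingly by each grid move. Hence the equivalences descend to the quotients $\widehat{CF}(g,\omega)=CF^-(g,\omega)/\mathcal U$, proving invariance of $\widehat{HF}(g,\omega)$. The main bookkeeping obstacle is the stabilization' move: one has to track how the new $O$- and $X$-markings affect the Alexander grading after pushing forward by $\omega_*$ and confirm that the resulting discrepancy is a single global integer shift, so that only relative, not absolute, Alexander grading is preserved --- which matches the statement being proved.
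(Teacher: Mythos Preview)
Your proposal is correct and follows exactly the same approach as the paper: both argue that since $CF^-(g,\omega)$ is obtained from the Harvey--O'Donnol complex by post-composing the $H_1(S^3-f(G))$-valued Alexander grading with the homomorphism $\omega_*$ to $\mathbb{Z}$, the invariance results of Harvey and O'Donnol are inherited verbatim. The paper's proof is essentially a two-sentence sketch of this idea, whereas you have spelled out the construction of $\omega_*$, the compatibility with the grid moves, and the descent to the hat version in full.
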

\begin{proof}
Our chain complexes are obtained from the chain complexes of Harvey and O'Donnol by taking some homomorphism $H_1(S^3-f(G))\to\mathbb{Z}$.
The only difference between these two versions are domains of Alexander grading.
So the invariances are inherited.
\end{proof}

\begin{rem}
$\widehat{CF}(g,\omega)$ can be regarded as absolute Maslov graded, relative Alexander graded $\mathbb{F}$-vector space with basis $\{U_{V+1}^{k_{V+1}}\cdots U_n^{k_n}\cdot\mathbf{x}|k_i\geq0,\mathbf{x\in S}(g)\}$ and the differential
\[
\widehat{\partial}(\mathbf{x})=\sum_{\mathbf{y}\in\mathbf{S}(g)}\left(
\sum_{\{r\in \mathrm{Rect}^\circ(\mathbf{x,y})|r\cap\mathbb{X}=r\cap\mathbb{O}^*=\phi\}}
U_{V+1}^{O_{V+1}(r)}\cdots U_n^{O_n(r)}
\right)\mathbf{y},
\]
where $\mathbb{O}^*$ is the set of $O^*$-markings.
\end{rem}

\section{The proofs for the main Theorem \ref{thm:main1}}
\begin{lem}[Lemma A.3.10,\cite{grid-book}]
\label{lem:cone}
Let $C$, $C'$, and $C''$ be three bigraded chain complexes, and $f\colon C\to C'$ and $g\colon C'\to C''$ are chain maps that are homogeneous of degrees $(a,p)$ and $(b,q)$ respectively.
Then there is a chain map $\Phi\colon\mathrm{Cone}(g)\to \mathrm{Cone}(f)$ which is homogeneous of degree $(-b-1,-q)$ and whose induced map on homology fits into an exact triangle
\[
\xymatrix{
H(\mathrm{Cone}(g))\ar[rr]^-{(-b-1,-q)}&\ar@{}@<0.8ex>[d]&H(\mathrm{Cone}(g))\ar[dl]^-{(b,q)}\\
&H(\mathrm{Cone}(g\circ f))\ar[ul]^-{(0,0)}&
}
\]
where the pair of integers indicate shifts of bigradings.
\end{lem}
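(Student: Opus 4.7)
This is a standard piece of homological algebra --- essentially a fragment of the octahedral axiom in the derived category --- so the proof plan is purely formal. First, fix a cone convention in the bigraded setting: for a chain map $h\colon A\to B$ of bidegree $(\alpha,\beta)$, set $\mathrm{Cone}(h)=B\oplus A\llbracket -\alpha-1,-\beta\rrbracket$ with differential $(b,a)\mapsto(\partial b+h(a),\partial a)$. A short check shows this differential is homogeneous of degree $(-1,0)$ and squares to zero, and there is a canonical short exact sequence $0\to B\to\mathrm{Cone}(h)\to A\llbracket -\alpha-1,-\beta\rrbracket\to 0$ of bigraded chain complexes.

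Next I would define $\Phi\colon\mathrm{Cone}(g)\to\mathrm{Cone}(f)$ by $\Phi(c'',c')=(c',0)$, i.e.\ the natural map that sends the $C'$-summand of $\mathrm{Cone}(g)$ into the $C'$-summand of $\mathrm{Cone}(f)$ and kills $C''$. Comparing the shift $\llbracket -b-1,-q\rrbracket$ on $C'\subset\mathrm{Cone}(g)$ with the untwisted $C'\subset\mathrm{Cone}(f)$ shows $\Phi$ is homogeneous of bidegree $(-b-1,-q)$, and a componentwise comparison of the cone differentials shows $\Phi$ is a chain map. The remaining work is to identify $\mathrm{Cone}(\Phi)$ up to bigraded chain homotopy with $\mathrm{Cone}(g\circ f)$: as a bigraded vector space $\mathrm{Cone}(\Phi)$ contains two copies of $C'$ with matching shifts paired by the identity, which forms an acyclic subcomplex. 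After canceling this pair, the induced differential on the surviving summands $C''$ and $C\llbracket -a-b-1,-p-q\rrbracket$ is exactly the cone differential for $g\circ f$. The stated exact triangle is then the long exact sequence in homology associated to $0\to\mathrm{Cone}(f)\to\mathrm{Cone}(\Phi)\to\mathrm{Cone}(g)\llbracket b,q\rrbracket\to 0$, rewritten via the equivalence $\mathrm{Cone}(\Phi)\simeq\mathrm{Cone}(g\circ f)$, with the bidegrees on each edge read off from the shifts and from the bidegree of $\Phi$.

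The main obstacle is the bookkeeping in the cancellation step. One has to exhibit an explicit bigraded chain homotopy equivalence that removes the two copies of $C'$, and then check that the resulting induced differential on $C''\oplus C\llbracket -a-b-1,-p-q\rrbracket$ is exactly the mapping cone differential for $g\circ f$, with no leftover homotopy correction. The shift conventions propagate into the bidegrees on all three edges of the final triangle, so matching the stated bidegrees through the long exact sequence and the identification $\mathrm{Cone}(\Phi)\simeq\mathrm{Cone}(g\circ f)$ requires careful tracking; the statement in the lemma will come out once these are handled correctly (noting that the displayed triangle in the excerpt appears to have $H(\mathrm{Cone}(g))$ in place of $H(\mathrm{Cone}(f))$ in one vertex, a typo the same bookkeeping will also clarify).
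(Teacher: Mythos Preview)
The paper does not prove this lemma at all: it is quoted verbatim from \cite{grid-book} (Lemma A.3.10) and used as a black box in the proof of Theorem~\ref{thm:main1}. Your outline is the standard homological-algebra argument for this statement (the octahedral-type identification of the cone on the comparison map with the cone on the composite), and it is correct; there is nothing in the paper to compare it against. Your observation that one vertex of the displayed triangle should read $H(\mathrm{Cone}(f))$ rather than $H(\mathrm{Cone}(g))$ is also correct---that is a typo in the paper.
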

Let $(f,f_+,f_0,\omega)$ be an oriented skein triple of MOY graphs.
Choose graph grid diagrams $(g,g_+,g_0)$, take a point $c_+$ in $g_+$ and $c_0$ in $g_0$, and label the $X$-markings so that each $X$-marking corresponds to the weight of each edge in Figure \ref{fig:triple1}.
Let $m=\sum_{n=1}^i\omega_X(a_n)=\sum_{n=1}^j\omega_X(j_n)=\sum_{n=1}^k\omega_X(c_n)=\sum_{n=1}^l\omega_X(d_n)$.
Here $\omega_+,\omega_0$ denotes the weights determined by $\omega$.
\begin{figure}
\centering
\includegraphics[scale=0.47]{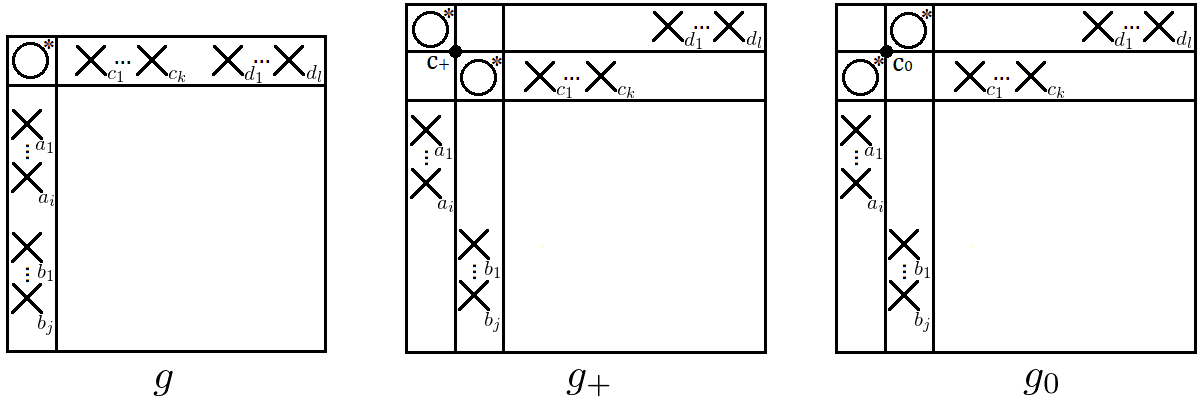}
\caption{Graph grid diagrams representing $f$,$f_+$, and $f_0$}
\label{fig:triple1g}
\end{figure}

We will decompose the set of states $\mathbf{S}(g_+)$ as the disjoint union $\mathbf{I}(g_+)\cup\mathbf{N}(g_+)$, where $\mathbf{I}(g_+)=\{\mathbf{x\in S}(g_+)|c\in\mathbf{x}\}$ and $\mathbf{N}(g_+)=\{\mathbf{x\in S}(g_+)|c\notin\mathbf{x}\}$.
Let $I_+$ and $N_+$ be a subspace of $\widehat{CF}(g_+,\omega_+)$ as
\begin{align*}
I_+ &=\{U_{V+1}^{k_{V+1}}\cdots U_n^{k_n}\mathbf{x}\in\widehat{CF}(g_+)|\mathbf{x}\in I(g_+)\},\\
N_+ &=\{U_{V+1}^{k_{V+1}}\cdots U_n^{k_n}\mathbf{x}\in\widehat{CF}(g_+)|\mathbf{x}\in N(g_+)\}
\end{align*}
This decomposition gives a decomposition of $\widehat{CF}(g_+,\omega_+)=I_+\oplus N_+$ as a vector space.
Then we can write the differential on $\widehat{CF}(g_+,\omega_+)$ as
\[
\widehat{\partial}=
\begin{pmatrix}
\partial_I^I & 0\\
\partial_I^N & \partial_N^N
\end{pmatrix},
\]
where, for instance, $\partial_I^N$ is a part of the differential such that it is a map from $I_+$ to $N_+$.
So $(N_+,\partial_N^N)$ is a subcomplex of $(\widehat{CF}(g_+,\omega_+),\widehat{\partial})$, and $(I_+,\partial_I^I)$ is a quotient complex.
By the definition of algebraic cone, we see that $\widehat{CF}(g_+,\omega_+)\cong\mathrm{Cone}(\partial_I^N\colon (I_+,\partial_I^I)\to(N_+,\partial_N^N))$.

We can define $I_0$ and $N_0$ for $\widehat{CF}(g_0)$ in the same way.
In this case, the differential can be written as
\[
\begin{pmatrix}
\partial_I^I & \partial_N^I\\
0 & \partial_N^N
\end{pmatrix},
\]
and $\widehat{CF}(g_0,\omega_0)\cong\mathrm{Cone}(\partial_N^I\colon (I_0,\partial_I^I)\to(N_0,\partial_N^N))$.

Consider the bijection $c_+\colon\mathbf{S}(g)\to \mathbf{I}(g_+)$ defined as $\mathbf{x}\mapsto\mathbf{x}\cup\{c_+\}$.
Using \ref{mm} and \ref{aa}, simple calculations show that $c_+$ preserves Maslov grading and increases Alexander grading by $m$.
Also Consider the bijection $c_0\colon\mathbf{I}(g_0)\to \mathbf{S}(g)$ defined as $\mathbf{x}\cup\{c_0\}\mapsto\mathbf{x}$.
This map increases Maslov grading by 1 and preserves Alexander grading.

The followings are easy.
\begin{lem}[Lemma 5.2.5,\cite{grid-book}]
\label{lem:i}
\begin{enumerate}
\item[\rm{(1)}] $c_+$ induces an isomorphism $\widehat{CF}(g,\omega)\cong I_+$ between absolute Maslov graded, relative Alexander graded chain complexes.
\item[\rm{(2)}] $c_0$ induces an isomorphism $I_0\cong \widehat{CF}(g,\omega)$ between absolute Maslov graded, relative Alexander graded chain complexes.
\end{enumerate}
\end{lem}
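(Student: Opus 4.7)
The plan is to verify both isomorphisms by constructing explicit bigraded $\mathbb{F}$-linear chain isomorphisms from the state-level bijections $c_+$ and $c_0$, and then checking that the differentials correspond rectangle-by-rectangle. The key structural observation is that the grids $g_+$ and $g_0$ restricted to the complement of the row and column through $c_+$ (resp.\ $c_0$) are canonically identified with $g$, so that rectangles in $g_+,g_0$ avoiding that row and column correspond naturally to rectangles in $g$, and the $U$-variables $U_{V+1},\dots,U_n$ assigned to $O$-markings outside this region are identified accordingly.

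For (1), I would extend the state map $\mathbf{x}\mapsto\mathbf{x}\cup\{c_+\}$ $\mathbb{F}$-linearly and $U$-equivariantly by $U_{V+1}^{k_{V+1}}\cdots U_n^{k_n}\mathbf{x}\mapsto U_{V+1}^{k_{V+1}}\cdots U_n^{k_n}(\mathbf{x}\cup\{c_+\})$. That this is an $\mathbb{F}$-linear bijection onto $I_+$ is immediate from the definitions of $I_+$ and $\mathbf{I}(g_+)$. The grading assertions (Maslov-preserving, Alexander-raising by $m$) are the direct numerical computation sketched immediately before the lemma, using \eqref{mm} and \eqref{aa} together with the balanced-coloring identity $m=\sum_{s=1}^{i}\omega(a_s)$. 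Since Alexander grading is tracked only relatively, the uniform shift by $m$ does not obstruct the isomorphism of relatively Alexander graded chain complexes.

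The core step is matching differentials. The component $\partial_I^I$ of $\widehat{\partial}$ on $I_+$ counts empty rectangles in $g_+$ from $\mathbf{x}\cup\{c_+\}$ to $\mathbf{y}\cup\{c_+\}$ that avoid $\mathbb{X}\cup\mathbb{O}^*$. Since $c_+$ is fixed as a state point but is not a corner of the rectangle, any such rectangle lies entirely outside the row and column of $c_+$. In that complementary region $g_+$ is identified with $g$, and the rectangles there—together with their exponents of each $U_i$—are precisely the empty rectangles in $g$ avoiding $\mathbb{X}\cup\mathbb{O}^*$ counted by $\widehat{\partial}$ on $\widehat{CF}(g,\omega)$. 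This gives the chain-level intertwining and completes (1).

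Part (2) is entirely parallel: extend $\mathbf{x}\cup\{c_0\}\mapsto\mathbf{x}$ $\mathbb{F}$-linearly and $U$-equivariantly to a bijection $I_0\to\widehat{CF}(g,\omega)$, verify the Maslov shift by $+1$ and Alexander-preservation from \eqref{mm} and \eqref{aa}, and identify the rectangles counted by $\partial_I^I$ on $I_0$ (which must fix $c_0$ and so avoid its row and column) with those counted by $\widehat{\partial}$ on $\widehat{CF}(g,\omega)$. The main obstacle I anticipate is the explicit bookkeeping in the neighborhood of the crossing: one has to check that the $X$-markings $a_s,b_s,c_s,d_s$ and the local $O$- and $O^*$-markings of $g_+$ and $g_0$ line up so that the grading shifts come out exactly as claimed and that no rectangle is lost or gained in the bijection. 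This is a routine but careful inspection of Figure~\ref{fig:triple1g}.
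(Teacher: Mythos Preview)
Your proposal is correct and follows essentially the same approach as the paper's proof: extend the state bijection $U$-equivariantly using the natural correspondence between the variables $U_{V+1},\dots,U_n$ on $g$ and on $g_+$ (resp.\ $g_0$), and verify that it is a chain isomorphism via the evident bijection between empty rectangles connecting states of $\mathbf{S}(g)$ and those connecting states of $\mathbf{I}(g_+)$ (resp.\ $\mathbf{I}(g_0)$). One small imprecision: an empty rectangle between states in $\mathbf{I}(g_+)$ need not literally avoid the row and column through $c_+$---it can cross one of them---but when it does it necessarily contains one of the local $X$- or $O^*$-markings of $g_+$ and so is not counted by $\partial_I^I$, which is what actually yields the bijection with rectangles in $g$ contributing to $\widehat{\partial}$.
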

\begin{proof}
The bijection $c_+$ induces a homomorphism on chain complexes because there is a natural bijection between $\{U_{V+1},\dots,U_n\}$ on $g$ and $\{U_{V+1},\dots,U_n\}$ on $g_+$.
It is easy to check that the induced map by $c_+$ is a chain map.
The induced map is an isomorphism because there is a bijection between rectangles connecting states of $\mathbf{S}(g)$ and states of $\mathbf{I}(g_+)$.

$c_0$ induces an isomorphism in the same way.
\end{proof}
Consider the bijection $\alpha\colon \mathbf{N}(g_+)\to \mathbf{N}(g_0)$ defined as $\mathbf{x}\mapsto\mathbf{x}$.
This map increases Maslov grading by 1 and preserves Alexander grading.
The following lemma can be shown in the same way as Lemma \ref{lem:i}.
\begin{lem}
\label{lem:n}
$\alpha$ induces an isomorphism $N_+\cong N_0$ between absolute Maslov graded, relative Alexander graded chain complexes.
\end{lem}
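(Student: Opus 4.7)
The plan is to mirror the proof of Lemma \ref{lem:i}. Because $g_+$ and $g_0$ agree outside the small skein region depicted in Figure \ref{fig:triple1g}, there is a canonical identification of the $O$-markings $O_{V+1},\ldots,O_n$ of $g_+$ with those of $g_0$, and hence of the formal variables $U_{V+1},\ldots,U_n$. I would first use this to extend $\alpha$ to an $\mathbb{F}[U_{V+1},\ldots,U_n]$-module isomorphism $N_+\to N_0$.

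Next I would verify that $\alpha$ is a chain map by constructing an explicit bijection
\[
\mathrm{Rect}^\circ_{g_+}(\mathbf{x},\mathbf{y})\longleftrightarrow\mathrm{Rect}^\circ_{g_0}(\alpha(\mathbf{x}),\alpha(\mathbf{y}))
\]
for any $\mathbf{x},\mathbf{y}\in\mathbf{N}(g_+)$, restricted to rectangles avoiding $\mathbb{X}\cup\mathbb{O}^*$. The crucial point is that neither $\mathbf{x}$ nor $\mathbf{y}$ occupies the distinguished point, so any rectangle between them is determined by corners lying in a region where $g_+$ and $g_0$ coincide, except possibly when the rectangle sweeps across the skein box. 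I would enumerate the local configurations to confirm that the correspondence preserves the multiplicities $O_i(r)$ for $i\geq V+1$, so that the $U$-weights in the two differentials match.

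Then I would compute the grading shifts from \eqref{mm} and \eqref{aa}. The positions of the $O$-markings (including the $O^*$-markings) and $X$-markings inside the skein box differ between $g_+$ and $g_0$ by a controlled local change. Using the formula for $M(\mathbf{x})$, the difference $M_{g_0}(\alpha(\mathbf{x}))-M_{g_+}(\mathbf{x})$ reduces to a constant that is independent of $\mathbf{x}\in\mathbf{N}$, and I would verify that it equals $+1$. For the Alexander grading, the balanced condition $\sum_{s=1}^i\omega(a_s)=\sum_{s=1}^j\omega(b_s)=\sum_{s=1}^k\omega(c_s)=\sum_{s=1}^l\omega(d_s)=m$ is what makes the $\omega$-weighted contributions from the displaced markings cancel, so the Alexander grading is preserved.

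The main obstacle I anticipate is the local bookkeeping inside the skein box: carefully cataloguing the rectangles that sweep through this region and checking that, in each case, rectangles in $g_+$ which hit $\mathbb{X}$ or $\mathbb{O}^*$ correspond precisely to rectangles in $g_0$ which hit $\mathbb{X}$ or $\mathbb{O}^*$. Once this pictorial analysis is fixed, both the chain-map verification and the grading-shift computation reduce to small, explicit calculations in that box, entirely analogous to those underlying Lemma \ref{lem:i}.
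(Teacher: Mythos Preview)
Your proposal is correct and follows exactly the approach the paper intends: the paper's own proof is simply the sentence ``The following lemma can be shown in the same way as Lemma~\ref{lem:i},'' and your outline spells out precisely what that deferred argument is---extending $\alpha$ over the $U$-variables via the identification of markings outside the skein box, matching empty rectangles avoiding $\mathbb{X}\cup\mathbb{O}^*$ by the local analysis near $c$, and reading off the grading shift $(+1,0)$ from \eqref{mm} and \eqref{aa}.
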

\begin{proof}[\textbf{proof of theorem \ref{thm:main1}}]
Now we have a diagram such as
\[
\xymatrix{
I_+ \ar[r]^-{\partial_I^N} & N_+ \ar[r]^\alpha & N_0 \ar[r]^-{\partial_N^I} & I_0.
}
\]
Using Lemma \ref{lem:cone}, we can get the following exact triangle:
\[
\xymatrix{
H(\mathrm{Cone}(\partial_N^I))\ar[rr]^-{(0,0)}&\ar@{}@<0.8ex>[d]&H(\mathrm{Cone}(\alpha\circ\partial_I^N))\ar[dl]^-{(-1,0)}\\
&H(\mathrm{Cone}(\partial_N^I\circ \alpha\circ\partial_I^N))\ar[ul]^-{(0,0)}&
}
\]
As mentioned above, $H(\mathrm{Cone}(\partial_N^I))\cong\widehat{HF}(g,\omega)$.
By Lemma \ref{lem:n}, we have $H(\mathrm{Cone}(\alpha\circ\partial_I^N))\cong H(\mathrm{Cone}(\partial_I^N))\llbracket -1,0\rrbracket\cong \widetilde{HF}(g_+,\omega_+)\llbracket -1,0\rrbracket$.

For any state $\mathbf{x}\in I_+$, $(\partial_N^I\circ \alpha\circ\partial_I^N)(\mathbf{x})=0$ because the composite domain $r_1*r_2$ where $r_1$ is appear in $\partial_N^I$ and $r_2$ in $\partial_I^N$ must be an annulus, so some $X$-markings are contained in $r_1$ or $r_2$.

So we have $H(\mathrm{Cone}(\partial_N^I\circ \alpha\circ\partial_I^N))\cong H(\mathrm{Cone}(0\colon I_+\to I_0))$.
The composite map $\partial_N^I\circ \alpha\circ\partial_I^N$ is homogeneous of degree $(-1,0)$.
So using lemma \ref{lem:i}, we see that $H(\mathrm{cone}(0\colon I_+\to I_0))\cong H(I_+)\oplus H(I_0)\cong\widehat{HF}(g,\omega)\llbracket 0,-m\rrbracket\oplus\widehat{HF}(g,\omega)\llbracket 1,0\rrbracket\cong\widehat{HF(g,\omega)}\otimes W(m)\llbracket 0,-m\rrbracket$.
Then we get the following exact triangle:
\[
\xymatrix{
\widehat{HF}(g_0,\omega_0)\ar[rr]^-{(-1,0)}&\ar@{}@<0.8ex>[d]&\widehat{HF}(g_+,\omega_+)\ar[dl]^-{(1,0)}\\
&\widehat{HF}(g,\omega)\otimes W(m)\ar[ul]^-{(-1,0)}&
}
\]
\begin{figure}
\centering
\includegraphics[scale=0.47]{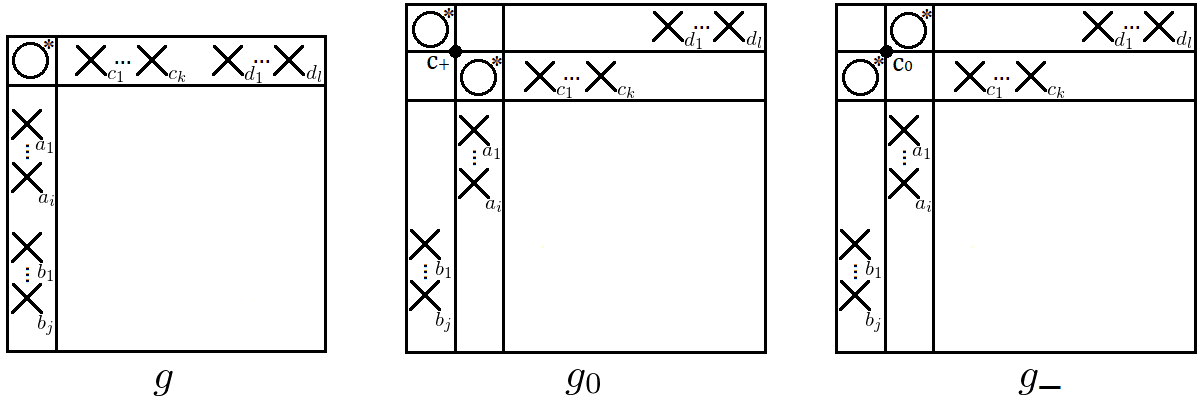}
\caption{Graph grid diagrams representing $f$,$f_0$, and $f_-$}
\label{fig:triple2g}
\end{figure}

Let $(f,f_-,f_0)$ be an oriented skein triple of MOY graphs.
The exact triangle of $(f,f_0,f_-)$ can be obtained in the same way;

First, choose the graph grid diagrams $(g,g_0,g_-)$ as in Figure \ref{fig:triple2g}.

Second, consider bijections $c'_0\colon \mathbf{S}(g)\to I_0$, $c_-\colon I_-\to\mathbf{S}(g)$, and $\beta\colon N_0\to N_-$ in the same way as $c_+$, $c_0$, and $\alpha$, respectively.
$c'_0$ is homogeneous of degree $(0,m)$, $c_-$ is $(1,0)$, and $\beta$ is $(1,-m)$.
These three maps induce isomorphisms on homology.

Finally, the exact triangle is obtained from the following diagram;
\[
\xymatrix{
I_0 \ar[r]^-{\partial_I^N} & N_0 \ar[r]^\beta & N_- \ar[r]^-{\partial_N^I} & I_-.
}
\]
\end{proof}

\section{The proofs for the main Theorem \ref{thm:main2} and Theorem \ref{thm:main3}}
First, we show  Theorem \ref{thm:main2} and Theorem \ref{thm:main3} as corollaries of the following proposition.

\begin{figure}
\centering
\includegraphics[scale=0.5]{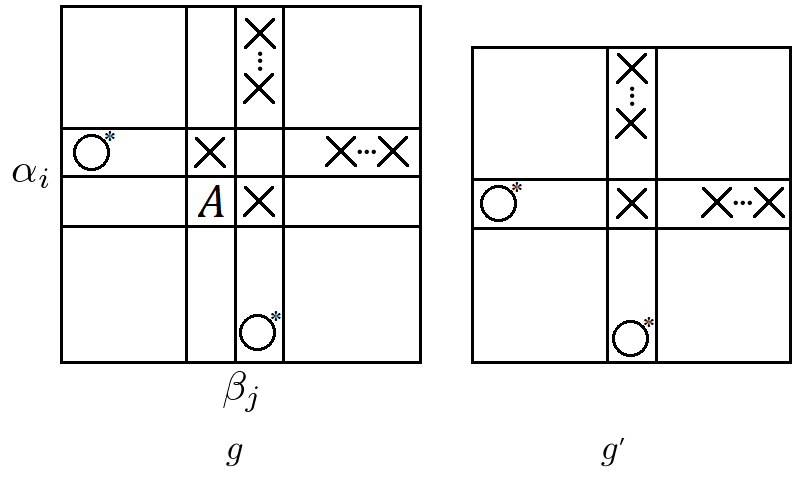}
\caption{Graph grid diagrams in Proposition \ref{prop:desta}}
\label{fig:prop-desta}
\end{figure}

\begin{prop}
\label{prop:desta}
Let $g,g'$ are two graph grid diagrams as in Figure \ref{fig:prop-desta}.
$g'$ is obtained from $g$ by destabilization'-like move.
The weight $\omega'$ for $g'$ is naturally determined by the weight $\omega$ for $g$.
Assume that the following conditions are satisfied:
\begin{itemize}
    \item $g$ is a $(n+1)\times(n+1)$ graph grid diagram.
    \item None of the $X$-markings on the two rows along the horizontal circle $\alpha_i$ line up vertically.
    \item None of the $X$-markings on the two columns along the vertical circle $\beta_j$ line up horizontally.
    \item $A=O$ or $A=O^*$.
    \item there is just one $X$-marking in either row or column containing $A$.
\end{itemize}
Then, there is a quasi-isomorphism between $CF^-(g,\omega)$ and $CF^-(g',\omega')$ as absolute Maslov graded, relative Alexander graded chain complexes over $\mathbb{F}[U_1,\dots, U_n]$,
\end{prop}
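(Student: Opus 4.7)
The strategy is to adapt the destabilization-invariance argument of \cite[Chapter 5]{grid-book}, in the form used for graph grid diagrams by Harvey and O'Donnol \cite{spatial}. Let $c$ be the intersection point at the corner of the square containing $A$ adjacent to the unique $X$-marking $X_1$ in the row or column of $A$, and let $O_1$ denote the $O$- or $O^*$-marking at the other end of the edge through $X_1$. Since the three markings $A$, $X_1$, $O_1$ collapse to a single edge of $g'$ under the destabilization, it is natural to view $CF^-(g,\omega)$ as an $\mathbb{F}[U_1,\dots,U_n]$-module by identifying the formal variable $U_A$ with $U_{O_1}$. Decompose $\mathbf{S}(g) = \mathbf{I}(g) \sqcup \mathbf{N}(g)$ according to whether a state contains $c$, and let $I,N \subseteq CF^-(g,\omega)$ be the corresponding submodules.

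Because any empty rectangle connecting $\mathbf{I}(g)$ and $\mathbf{N}(g)$ must contain $X_1$ — and is therefore excluded from $\partial^-$ — the differential is triangular with respect to this decomposition, so exactly one of $I,N$ is a subcomplex and the other a quotient. I would then construct the bijection $\mathbf{I}(g) \to \mathbf{S}(g')$ that removes $c$ and reads off the remaining $n$ points as a state of $g'$; using (\ref{mm}), (\ref{aa}), and (\ref{uu}), one checks that this bijection preserves Maslov grading and shifts Alexander grading by a uniform constant. There is, moreover, a natural bijection between the empty rectangles (avoiding $X$-markings) that contribute to the differential on the corner-containing side and those on $g'$, with matching $U$-exponents under the identification $U_A = U_{O_1}$. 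Hence the corner-containing summand is isomorphic, up to the fixed grading shift, to $CF^-(g',\omega')$ as an absolute Maslov graded, relative Alexander graded $\mathbb{F}[U_1,\dots,U_n]$-module chain complex.

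It remains to prove that the non-corner summand is acyclic, which will force the inclusion (or projection) relating $CF^-(g,\omega)$ to the corner-containing summand to be a quasi-isomorphism. For this I would pair each state in the non-corner class with its unique partner via the empty rectangle crossing $X_1$; the hypotheses that the $X$-markings on the two rows along $\alpha_i$ do not line up vertically and that those on the two columns along $\beta_j$ do not line up horizontally ensure that this pairing is a well-defined, fixed-point-free involution. The induced $\mathbb{F}[U_1,\dots,U_{n+1}]$-linear degree-$(+1,0)$ map $h$ should satisfy $\partial h + h\partial = \mathrm{id}$ on the non-corner summand, yielding the required acyclicity.

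The main obstacle will be the bookkeeping in this final step: verifying that $h$ is $U$-equivariant after identifying $U_A$ with $U_{O_1}$, that the Maslov and Alexander gradings of paired states differ by exactly $(-1,0)$, and that the argument is uniform in whether $A = O$ or $A = O^*$ (which is expected for the minus version, since the two cases give the same $\mathbb{F}[U_1,\dots,U_n]$-module structure on $CF^-$, though they differ in the hat-level quotient by $\mathcal{U}$). Once the nullhomotopy $h$ is in place, combining it with the chain-complex isomorphism on the corner summand produces the desired quasi-isomorphism $CF^-(g,\omega) \simeq CF^-(g',\omega')$.
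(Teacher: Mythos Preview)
Your decomposition $\mathbf{S}(g)=\mathbf{I}(g)\sqcup\mathbf{N}(g)$ and the identification of $I$ with a version of $CF^-(g',\omega')$ are the right first moves, but the argument breaks at the acyclicity step. First, the assertion that every empty rectangle connecting $\mathbf{I}(g)$ and $\mathbf{N}(g)$ contains $X_1$ is false: rectangles from $I$ to $N$ have $c$ as an \emph{initial} corner and may point away from the $2\times2$ block entirely, so $\partial_I^N$ is nonzero (it is $\partial_N^I$ that vanishes, making $N$ the subcomplex). Second, and fatally, $N$ is not acyclic. Over $\mathbb{F}[U_1,\dots,U_{n+1}]$ the bijection $\mathbf{I}(g)\to\mathbf{S}(g')$ identifies $I$ with $CF^-(g',\omega')[U_{n+1}]$, not with $CF^-(g',\omega')$ itself; the extra variable $U_{n+1}$ does not disappear by relabelling it $U_{O_1}$. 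The homotopy operators $\mathcal{H}^-_X,\mathcal{H}^-_O$ counting rectangles through $X_1$ and through $A$ show that $N$ is chain homotopy \emph{equivalent} to $I$, hence $H(N)\cong HF^-(g',\omega')[U_{n+1}]\neq0$. Your proposed pairing through $X_1$ is essentially $\mathcal{H}^-_X$ and lands in $I$, not in $N$, so it cannot be a nullhomotopy on $N$.

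What the paper does instead (following \cite[\S5.2]{grid-book}) is recognise $CF^-(g,\omega)\cong\mathrm{Cone}(\partial_I^N\colon I\to N)$ and compare it, via the commuting square with vertical maps $c$ and $c\circ\mathcal{H}^-_X$, to $\mathrm{Cone}\bigl(U\colon CF^-(g',\omega')[U_{n+1}]\to CF^-(g',\omega')[U_{n+1}]\bigr)$, where $U$ equals $U_{n+1}-U_n$ or $U_{n+1}$ depending on the local picture. The long exact sequence of this second cone then yields $H(\mathrm{Cone}(U))\cong HF^-(g',\omega')$ as $\mathbb{F}[U_1,\dots,U_n]$-modules. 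The point you are missing is that the extra polynomial variable is killed not by a nullhomotopy on one summand, but by the interaction $\partial_I^N$ between the two summands.
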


\begin{rem}
Since $g$ is a $(n+1)\times(n+1)$ graph grid diagram, $CF^-(g,\omega)$ is defined as an absolute Maslov graded, relative Alexander graded chain complex over $\mathbb{F}[U_1,\dots,U_{n+1}]$, where $U_{n+1}$ are related to the $O$-marking at $A$.
The homology of this chain complex is an invariant for transverse spatial graphs when we regard $HF^-(g,\omega)$ as $\mathbb{F}[U_1,\dots, U_{V}]$-module, where $V$ is the number of vertices of $f$.
\end{rem}

\begin{figure}
\centering
\includegraphics[scale=0.5]{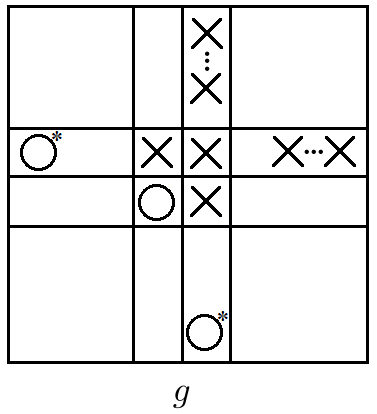}
\caption{Graph grid diagrams in the proof of Theorem \ref{thm:main2}}
\label{fig:parallel2}
\end{figure}

\begin{proof}[\textbf{Proof of theorem \ref{thm:main2}}]
First, we assume that the parallel two edges are connecting two different vertices.
Take a pair of a graph grid diagram and a weight $(g,\omega)$ representing $(f,\omega)$ such that the parallel two edges appear in the $2\times2$ block as in Figure \ref{fig:parallel2}.
Then $g'$ appearing in Figure \ref{fig:prop-desta} represents $f'$ and the weight $\omega'$ for $g'$ naturally determined by $(g,\omega)$ represents the balanced coloring for $f'$.
We get $HF^-(g,\omega) \cong HF^-(g',\omega')$ by using Proposition \ref{prop:desta} as $A=O$ directly.
The quasi-isomorphism $CF^-(g,\omega)\to CF^-(g',\omega')$ induces the quasi-isomorphism $\widehat{CF}(g,\omega)=\frac{CF^-(g,\omega)}{U_1=\dots=U_V=0}\to \widehat{CF}(g',\omega')=\frac{CF^-(g',\omega')}{U_1=\dots=U_V=0}$, so we get $\widehat{HF}(g,\omega) \cong\widehat{HF}(g',\omega')$.

\begin{figure}
\centering
\includegraphics[scale=0.55]{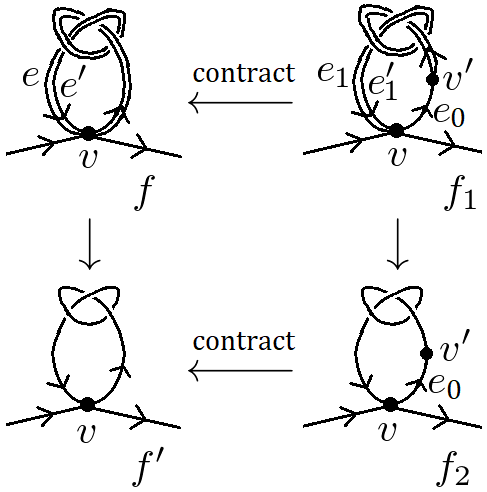}
\caption{Proof of Theorem \ref{thm:main2}, when the parallel two edges are loops. The graphs of the left part is obtained by contracting the edge $e_0$.}
\label{fig:para-single}
\end{figure}

Next, we assume that the two parallel edges $e,e'$ of $f$ are loops at $v$.
Take a new balanced transverse spatial graph $(f_1,\omega_1)$ such that $f$ is obtained from $f_1$ by contracting an edge $e_0$ connecting $v$ and the new vertex $v'$ as in Figure \ref{fig:para-single}, and $\omega_1$ is naturally determined by $\omega$.
By Theorem \ref{thm:main3}, $HF^-(f,\omega)\cong HF^-(f_1,\omega_1)$ as $\mathbb{F}[U_1,\dots,U_V]$-module, where $U_V$ is related to $v$ and $U_{V+1}$ is related to $v'$ (note that $HF^-(f_1)$ is a $\mathbb{F}[U_1,\dots,U_{V+1}]$-module).
Let $(f_2,\omega_2)$ be a balanced transverse spatial graph obtained by unifying the two parallel edges $e_1,e'_1$ determined by $e,e'$.
Since $e_1,e'_1$ are not loops, $HF^-(f_1,\omega_1)\cong HF^-(f_2,\omega_2)$ as $\mathbb{F}[U_1,\dots,U_{V+1}]$-module.
$f'$ is obtained from $f_2$ by contracting $e_0$.
Thus $HF^-(f_2,\omega_2)\cong HF^-(f',\omega')$ as $\mathbb{F}[U_1,\dots,U_V]$-module.
Note that $U_V$ and $U_{V+1}$ are chain homotopic because of Proposition 4.21, \cite{spatial}.
\end{proof}

\begin{proof}[\textbf{Proof of theorem \ref{thm:main3}}]
If $e$ is the only incoming edge of $v$, take a graph grid diagram $g$ representing $f$ such that the $O^*$-marking related to $v$ is located at $A$ as in Figure \ref{fig:prop-desta}.
If $e$ is the only outgoing edge of $v$, take a graph grid diagram $g$ representing $f$ such that the $O^*$-marking related to $v$ is located at $A$ as in Figure \ref{fig:prop-desta}.
We get $HF^-(g,\omega) \cong HF^-(g',\omega')$ by using Proposition \ref{prop:desta} as $A=O^*$ directly.

Next, we show the hat version carefully.
Number the $O^*$-markings in $g'$ as $O_1,\dots, O_V$.
Since $g$ has one more $O^*$-marking, number $O^*$-markings in $g$ as $O_1,\dots, O_V,O_{n+1}$.
By the definition of the hat chain complexes, $\widehat{CF}(g,\omega)=\frac{CF^-(g,\omega)}{U_1=\dots=U_V=U_{n+1}=0}$ and $\widehat{CF}(g',\omega')=\frac{CF^-(g',\omega')}{U_1=\dots=U_V=0}$.
The quasi-isomorphism $CF^-(g,\omega) \to CF^-(g',\omega')$ induces the quasi-isomorphism $\frac{CF^-(g,\omega)}{U_1=\dots=U_V=0}\to \widehat{CF}(g',\omega')$.
Lemma 4.21,\cite{spatial} shows that $U_{n+1}$ is null-homotopic as homogeneous of degree $(-1,-m)$.
Using Lemma 4.31,\cite{spatial}, we see that $\widehat{HF}(g,\omega)\cong \widehat{HF}(g',\omega')\otimes W(m)$
\end{proof}

\begin{proof}[\textbf{Proof of Proposition \ref{prop:desta}}]
The basic idea is the same as the idea of the proof of stabilization invariance (Section 5.2.3,\cite{grid-book}).
The same argument works because we assume that there is just one $X$-marking in either row or column containing $A$.
Take the intersection point $\alpha_i\cap\beta_j$ as $c$.
Label the markings around $c$ as in the right part of Figure \ref{fig:prop-d} and the $O$-marking in the column containing $X_{j+1}$ as $O_n$.
\begin{figure}
\centering
\includegraphics[scale=0.5]{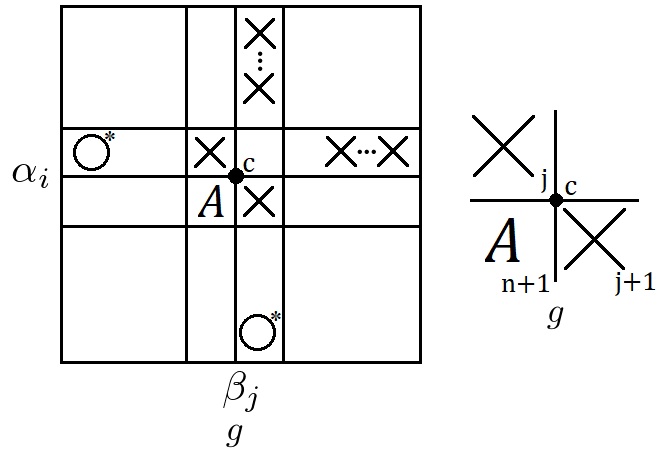}
\caption{The graph grid diagram in the proof of Theorem \ref{thm:main3} and the labels of the markings around $c$}
\label{fig:prop-d}
\end{figure}

Now, we assume that there is just one $X$-marking in the $row$ containing $A$.

First, we introduce some notations like in the previous section.
The set of states $\mathbf{S}(g)$ are decomposed as the disjoint union $\mathbf{I}(g)\cup\mathbf{N}(g)$.
Let $N$ (respectively, $I$) be the submodule of $CF^-(g,\omega)$ generated by $\mathbf{N}(g)$ (respectively, $\mathbf{I}(g)$).
As in the previous section, $N$ is a subcomplex of $CF^-(g,\omega)$.
Using the representation of the differential of $CF^-(g,\omega)$ as a matrix, we see that $CF^-(g,\omega)\cong \mathrm{Cone}(\partial_I^N\colon(I,\partial_I^I)\to(N,\partial_N^N))$.
The bijection between $\mathbf{I}(g)$ and $\mathbf{S}(g')$ induces the isomorphism $c\colon I\to CF^-(g',\omega')[U_{n+1}]$ as $\mathbb{F}[U_1,\dots,U_{n+1}]$-modules, where $CF^-(g',\omega')[U_{n+1}]=CF^-(g',\omega')\otimes_{\mathbb{F}[U_1,\dots,U_n]}\mathbb{F}[U_1,\dots,U_{n+1}]$.
By simple computations, $c$ is homogeneous of degree $(0,-A(\{c\}))$.

Second, we define the homotopy operators.
Consider the map $\mathcal{H}^-_O\colon I\to N$ and $\mathcal{H}^-_X\colon N\to I$ as
\begin{align*}
\mathcal{H}^-_O(\mathbf{x}) &=\sum_{\mathbf{y\in N}(g)}
\sum_{\{r\in \mathrm{Rect}^\circ(\mathbf{x,y})|r\cap\mathbb{X}=\phi,O_{n+1}\in r\}}
U_{1}^{O_{1}(r)}\cdots U_n^{O_n(r)}
\cdot\mathbf{y},\\
\mathcal{H}^-_X(\mathbf{x}) &=\sum_{\mathbf{y\in I}(g)}
\sum_{\{r\in \mathrm{Rect}^\circ(\mathbf{x,y})|r\cap\mathbb{X}=\{X_{j+1}\}\}}
U_{1}^{O_{1}(r)}\cdots U_n^{O_n(r)}
\cdot\mathbf{y}.
\end{align*}

\begin{lem}
\label{lem:parallel}
\begin{itemize}
    \item $\mathcal{H}^-_X$ is homogeneous of degree $(-1,-\omega(O_{n+1}))$ and $\mathcal{H}_O$ is homogeneous of degree $(1,\omega(X_{j+1}))=(1,\omega(O_{n+1}))$.
    \item $\mathcal{H}_X$ and $\mathcal{H}_O$ are chain homotopy equivalences.
\end{itemize}
\end{lem}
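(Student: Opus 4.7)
The plan is to verify the two claims separately, both times following the template of the stabilization-invariance argument in Section 5.2.3 of \cite{grid-book}.

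For the bidegree computation, I would start from the standard formulas
\[
M(\mathbf{x}) - M(\mathbf{y}) = 1 - 2\,|r \cap \mathbb{O}|, \qquad A(\mathbf{x}) - A(\mathbf{y}) = \sum_{i} \omega(O_i)\,O_i(r) - \sum_{j} \omega(X_j)\,X_j(r),
\]
valid for an empty rectangle $r$ from $\mathbf{x}$ to $\mathbf{y}$, with sums ranging over all $n+1$ markings of each type in $g$, and combine them with $M(U_i) = -2$ and $A(U_i) = -\omega(O_i)$. For the rectangles counted by $\mathcal{H}^-_X$ the only $X$-marking met is $X_{j+1}$ and no $O$-markings are met, so the $U$-weights contribute no correction and one obtains degree $(-1,-\omega(X_{j+1}))$; the hypothesis that there is exactly one $X$-marking in the row (or column) containing $A$ forces $\omega(O_{n+1}) = \omega(X_{j+1})$, giving the stated shift. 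The computation for $\mathcal{H}^-_O$ is symmetric, reading off the factor $U_{n+1}^{1}$ implicit in a rectangle passing through $O_{n+1}$ as an Alexander shift by $-\omega(O_{n+1})$.

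For the chain homotopy equivalence, the standard approach is to analyze $\partial\circ \mathcal{H}^-_X + \mathcal{H}^-_X\circ \partial$ and the compositions $\mathcal{H}^-_O \circ \mathcal{H}^-_X$ and $\mathcal{H}^-_X \circ \mathcal{H}^-_O$ by juxtaposing two empty rectangles and classifying the resulting domains. A pair consisting of a differential rectangle and an $\mathcal{H}^-_X$ rectangle decomposes in essentially two ways (sharing an edge, or overlapping in an L-shape), and the two contributions cancel modulo $2$; the only uncancelled contributions are boundary annuli through $c = \alpha_i \cap \beta_j$, and these produce the identity times appropriate $U$-powers. Thus $\mathcal{H}^-_X$ and $\mathcal{H}^-_O$ are chain maps, and by the same rectangle-counting analysis their two compositions equal the identity up to explicit chain homotopies assembled from pentagons with a corner at $c$, giving the desired homotopy equivalence between $(N,\partial_N^N)$ and $(I,\partial_I^I)$.

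The main obstacle is precisely the case analysis for annuli and pentagons passing through $c$: the hypothesis that exactly one $X$-marking lies in the row or column of $A$ is used to rule out rectangles wrapping around the torus that would meet additional $X$-markings, since such rectangles would introduce extra terms and destroy the modulo-$2$ cancellation. The two sub-cases (single $X$ in the row of $A$, versus single $X$ in the column of $A$) require symmetric but separate arguments, but both reduce to the same local cancellation picture around $c$. Once Lemma \ref{lem:parallel} is in hand, Proposition \ref{prop:desta} will follow by combining this homotopy equivalence with the isomorphism $c\colon I \to CF^-(g',\omega')[U_{n+1}]$ and the cone decomposition $CF^-(g,\omega)\cong \mathrm{Cone}(\partial_I^N)$ set up just before the lemma.
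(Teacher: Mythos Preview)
Your overall plan---reduce to Section 5.2.3 of \cite{grid-book} and verify the compositions---is the same as the paper's. But the homotopy you describe is the wrong object. You say the compositions equal the identity ``up to explicit chain homotopies assembled from pentagons with a corner at $c$.'' Pentagons do not occur in this argument: there is only a single grid diagram $g$ with one system of $\alpha$- and $\beta$-circles, so there is no bigon in which a pentagon could sit. Pentagons belong to the commutation-invariance proof, not to destabilization. What the paper (following Lemma 5.2.19 of \cite{grid-book}) actually uses is the map
\[
\mathcal{H}^-_{O,X}(\mathbf{x})=\sum_{\mathbf{y}\in\mathbf{N}(g)}\ \sum_{\{r\in\mathrm{Rect}^\circ(\mathbf{x,y})\,\mid\, r\cap\mathbb{X}=\{X_{j+1}\},\ O_{n+1}\in r\}} U_1^{O_1(r)}\cdots U_n^{O_n(r)}\cdot\mathbf{y},
\]
counting empty \emph{rectangles} through both $O_{n+1}$ and $X_{j+1}$. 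The two identities to check are then $\mathcal{H}^-_X\circ\mathcal{H}^-_O=\mathrm{Id}_I$ (on the nose---no homotopy is needed for this direction) and $\mathcal{H}^-_O\circ\mathcal{H}^-_X+\mathcal{H}^-_{O,X}\circ\partial_N^N+\partial_N^N\circ\mathcal{H}^-_{O,X}=\mathrm{Id}_N$.

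A smaller issue in your degree discussion: rectangles counted by $\mathcal{H}^-_X$ are allowed to meet $O_1,\dots,O_n$, and the $U$-weights compensate for exactly those; the only $O$-marking excluded is $O_{n+1}$, and its absence is what produces the net Maslov shift $-1$. Dually, there is no $U_{n+1}$ factor in $\mathcal{H}^-_O$ at all; the shift $(1,\omega(O_{n+1}))$ arises because the rectangle passes through $O_{n+1}$ without any compensating $U_{n+1}$ power, not from an ``implicit $U_{n+1}^1$.''
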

\begin{proof}
The proof is the same as Lemma 5.2.19,\cite{grid-book}.
Consider $\mathcal{H}^-_{O,X}\colon N\to N$ as
\[
\mathcal{H}^-_{O,X}(\mathbf{x})=\sum_{\mathbf{y\in N}(g)}
\sum_{\{r\in \mathrm{Rect}^\circ(\mathbf{x,y})|r\cap\mathbb{X}=\{X_{j+1}\},O_{n+1}\in r\}}
U_{1}^{O_{1}(r)}\cdots U_n^{O_n(r)}
\cdot\mathbf{y}.
\]
Then, it is sufficient to verify that $\mathcal{H}^-_X\circ\mathcal{H}^-_O=\mathrm{Id}_I$ and $\mathcal{H}^-_O\circ\mathcal{H}^-_X +\mathcal{H}^-_{O,X}\circ\partial_N^N+\partial_N^N\circ\mathcal{H}^-_{O,X}=\mathrm{Id}_N$.
The former equation is straightforward, and the latter equation holds by Lemma 5.2.19,\cite{grid-book}.
\end{proof}

We see the following commutative diagram:
\[
\xymatrix{
I\ar[r]^-{\partial_I^N}\ar[d]^-c & N\ar[d]^-{c\circ\mathcal{H}^-_{X}}\\
CF^-(g',\omega')[U_{n+1}]\llbracket 1,\omega(O_{n+1})\rrbracket\ar[r]^-{U} & CF^-(g',\omega')[U_{n+1}]
}
\]
where $U=U_{n+1}-U_n$ if there is just one $X$-marking in the column containing $X_{j+1}$ and $U=U_{n+1}$ otherwise.

\begin{lem}
A commutative diagram of bigraded $\mathcal{R}$-module chain complexes
\[
\xymatrix{
C\ar[r]^-f\ar[d]_-\phi & C' \ar[d]^-{\phi'} \\
D \ar[r]^-g & D'
}
\]
induces a bigraded chain map $\Phi\colon\mathrm{Cone}(f)\to\mathrm{Cone}(g)$.
If $\phi$ and $\phi'$ are quasi-isomorphisms, then $\Phi$ is a quasi-isomorphism.
\end{lem}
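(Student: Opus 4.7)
The plan is to construct $\Phi$ explicitly as the direct sum of $\phi$ and $\phi'$ and then deduce that it is a quasi-isomorphism via the standard five-lemma argument applied to the long exact sequences of the two mapping cones.

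First I would write $\mathrm{Cone}(f) = C \oplus C'$ with differential $\begin{pmatrix}\partial_C & 0 \\ f & \partial_{C'}\end{pmatrix}$, matching the convention used earlier in the paper where $\widehat{CF}(g_+,\omega_+) \cong \mathrm{Cone}(\partial_I^N)$ was presented with exactly this block form, and likewise for $\mathrm{Cone}(g)$. Setting $\Phi := \phi \oplus \phi'$, the chain-map condition $\partial_{\mathrm{Cone}(g)}\circ\Phi = \Phi\circ\partial_{\mathrm{Cone}(f)}$ splits into two diagonal equations that hold automatically because $\phi$ and $\phi'$ are chain maps, plus one off-diagonal equation $g\circ\phi = \phi'\circ f$, which is exactly the commutativity of the given square. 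Bigrading preservation is inherited from $\phi$ and $\phi'$.

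Second, I would observe that $C'$ sits inside $\mathrm{Cone}(f)$ as a subcomplex and $C$ (with the appropriate grading shift imposed by the cone) is the quotient, producing a short exact sequence
\begin{equation*}
0 \to C' \hookrightarrow \mathrm{Cone}(f) \twoheadrightarrow C \to 0,
\end{equation*}
and similarly $0 \to D' \to \mathrm{Cone}(g) \to D \to 0$. By construction $\Phi$ restricts to $\phi'$ on the subcomplex and descends to $\phi$ on the quotient, so the two short exact sequences fit into a commutative ladder with $\phi'$, $\Phi$, $\phi$ as vertical maps. Passing to homology yields a commutative ladder of long exact sequences whose horizontal rows are the mapping-cone long exact sequences.

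Finally, since $\phi$ and $\phi'$ induce isomorphisms on homology in every bigrading by assumption, the five lemma applied to this homology ladder forces $\Phi_*$ to be an isomorphism as well, which is precisely the quasi-isomorphism claim. I do not expect a serious obstacle here; the only care points are keeping the bigrading shift on the $C$-summand of the cone consistent between source and target, and invoking the naturality of the connecting homomorphism with respect to maps of short exact sequences, which is a routine diagram chase.
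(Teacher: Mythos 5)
Your proof is correct and is exactly the standard argument: $\Phi=\phi\oplus\phi'$ is a chain map precisely because the square commutes, and the ladder of the short exact sequences $0\to C'\to\mathrm{Cone}(f)\to C\to 0$ and $0\to D'\to\mathrm{Cone}(g)\to D\to 0$ together with the five lemma gives the quasi-isomorphism claim. The paper states this lemma without proof (it is the standard mapping-cone lemma, cf.\ Lemma 5.2.12 of the grid homology book), so your argument supplies precisely the omitted routine verification; over $\mathbb{F}=\mathbb{Z}/2\mathbb{Z}$ even the usual sign bookkeeping disappears, and the grading-shift caveat you flag is the only point needing care.
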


Using this lemma, we see that $H(\mathrm{Cone}(\partial_I^N))\cong\mathrm{Cone}(U)$.
Since $H(\mathrm{Cone}(\partial_I^N))\cong HF^-(g,\omega)$, it is sufficient to show that $\mathrm{Cone}(U)\cong HF^-(g',\omega')$.

By the definition of $CF^-(g',\omega')[U_{n+1}]$, we get $H(CF^-(g',\omega')[U_{n+1}])\cong H(CF^-(g',\omega'))[U_{n+1}]$.
Hence the injective map $U\colon CF^-(g',\omega')[U_{n+1}]\to CF^-(g',\omega')[U_{n+1}])$ induces the injective map $U\colon HF^-(g',\omega')[U_{n+1}]\to HF^-(g',\omega')[U_{n+1}])$ on homology.
Consider a long exact sequence of a mapping cone $\mathrm{Cone}(U)$, we get the following short exact sequence of absolute Maslov, relative Alexander graded $\mathbb{F}[U_1,\dots,U_{n+1}]$-modules:
\[
\xymatrix{
0\ar[r] & HF^-(g',\omega')[U_{n+1}]\ar[r]^-{U} & HF^-(g',\omega')[U_{n+1}]\ar[r] & H(\mathrm{Cone}(U))\ar[r] & 0 \\
}
\]
Therefore we see that $\frac{HF^-(g',\omega')[U_{n+1}]}{\mathrm{Im}U}\cong H(\mathrm{Cone}(U))$.
It is natural that $\frac{HF^-(g',\omega')[U_{n+1}]}{\mathrm{Im}U}\cong HF^-(g',\omega')$ as absolute Maslov, relative Alexander graded $\mathbb{F}[U_1,\dots,U_{n}]$-modules.
Combining these equations, we get $HF^-(g,\omega)\cong HF^-(g',\omega')$ as absolute Maslov, relative Alexander graded $\mathbb{F}[U_1,\dots,U_{n}]$-modules.
\end{proof}

\begin{rem}
If there is just one $X$-marking in the $column$ containing $A$, the same result is shown by a small modification.
Define $\mathcal{H}^-_X\colon N\to I$ as
\[
\mathcal{H}^-_X(\mathbf{x}) =\sum_{\mathbf{y\in I}(g)}
\sum_{\{r\in \mathrm{Rect}^\circ(\mathbf{x,y})|r\cap\mathbb{X}=\{X_{j}\}\}}
U_{1}^{O_{1}(r)}\cdots U_n^{O_n(r)}
\cdot\mathbf{y}.
\]
$\mathcal{H}^-_X$ is homogeneous of degree $(1,\omega(X_j))=(1,\omega(O_{n+1}))$.
We see that $\mathcal{H}^-_O$ and $\mathcal{H}^-_X$ are chain homotopy equivalences by using chain homotopy $\mathcal{H}^-_{O,X}$ defined as
\[
\mathcal{H}^-_{O,X}(\mathbf{x})=\sum_{\mathbf{y\in N}(g)}
\sum_{\{r\in \mathrm{Rect}^\circ(\mathbf{x,y})|r\cap\mathbb{X}=\{X_{j}\},O_{n+1}\in r\}}
U_{1}^{O_{1}(r)}\cdots U_n^{O_n(r)}
\cdot\mathbf{y}.
\]
Then, there is a commutative diagram
\[
\xymatrix{
I\ar[r]^-{\partial_I^N}\ar[d]^-c & N\ar[d]^-{c\circ\mathcal{H}^-_{X}}\\
CF^-(g',\omega')[U_{n+1}]\llbracket 1,\omega(O_{n+1})\rrbracket\ar[r]^-{U} & CF^-(g',\omega')[U_{n+1}]
}
\]
where $U=U_{n+1}-U_n$ if there is just one $X$-marking in the row containing $X_{j}$ and $U=U_{n+1}$ otherwise.
\end{rem}

\section{Applications}
\begin{lem}
\label{lem:ntimes}
Let $(f,\omega)$ be an MOY graph and $\omega_n$ be a balanced coloring for $f$ defined as $\omega_n(e)=n\cdot\omega(e)$ for any edge $e$.
Then there is a natural isomorphism $A_n\colon HF^-(f,\omega)\cong HF^-(f,\omega_n)$, where $A_n$ is a multiplication of Alexander grading by $n$.
\end{lem}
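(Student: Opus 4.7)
The plan is to observe that for any graph grid diagram $g$ representing $f$, the chain complexes $CF^-(g,\omega)$ and $CF^-(g,\omega_n)$ agree as ungraded chain complexes over the $U$-polynomial ring, and that the Alexander grading scales $\mathbb{Z}$-linearly in the balanced coloring. First I would fix such a $g$ and observe that the induced weight on markings satisfies $(\omega_n)_g = n\cdot\omega_g$ directly from its definition. The generators of $CF^-(g,\omega)$ are the states $\mathbf{S}(g)$, and the differential $\partial^-$ counts empty rectangles disjoint from $\mathbb{X}$ weighted only by powers of the $U_i$'s; neither piece of data involves the balanced coloring. Likewise, the Maslov grading formula \eqref{mm} is independent of $\omega$. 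Hence $CF^-(g,\omega)$ and $CF^-(g,\omega_n)$ share the same underlying absolute Maslov graded chain complex of $\mathbb{F}[U_1,\dots,U_N]$-modules (where $N$ is the grid size of $g$).

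Next I would compare the Alexander gradings using \eqref{aa} and \eqref{uu}. Both formulas are $\mathbb{Z}$-linear in the assignment of weights to markings, so $A_{\omega_n}(\mathbf{x}) = n\cdot A_\omega(\mathbf{x})$ for every state $\mathbf{x}$, and $A_{\omega_n}(U_i) = -(\omega_n)_g(O_i) = -n\cdot\omega_g(O_i) = n\cdot A_\omega(U_i)$. Therefore the identity map on the common underlying module is a chain isomorphism $CF^-(g,\omega)\to CF^-(g,\omega_n)$ which preserves Maslov grading and sends Alexander grading $s$ to $ns$. Passing to homology produces the desired $A_n$.

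For naturality I would remark that the chain-level quasi-isomorphisms realizing invariance under cyclic permutation, commutation$'$ and (de)stabilization$'$ are defined by combinatorial counts of rectangles and pentagons that do not refer to $\omega$, so they commute with the identity identification above. There is no substantive obstacle in this argument: the entire content of the lemma is the observation that the Alexander grading is the only ingredient in the construction of $CF^-$ that sees the balanced coloring, and it sees it $\mathbb{Z}$-linearly.
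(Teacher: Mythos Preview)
Your argument is correct and is exactly the approach the paper takes: the paper defines the chain-level map $a_n\colon CF^-(g,\omega)\to CF^-(g,\omega_n)$ by $\mathbf{x}\mapsto\mathbf{x}$ and notes via \eqref{aa} that it preserves the Maslov grading and multiplies the Alexander grading by $n$. Your write-up simply spells out more of the details (independence of $\partial^-$ and $M$ from $\omega$, the effect on $U_i$, naturality under grid moves) than the paper does.
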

\begin{proof}
There is a natural isomorphism $a_n\colon CF^-(g,\omega)\to CF^-(g,\omega_n)$ defined by $\mathbf{x}\mapsto\mathbf{x}$.
Using \eqref{aa}, $a_n$ preserves Maslov grading and multiples Alexander grading by $n$.
\end{proof}

\begin{prop}
Let $(f,\omega)$ be an MOY graph and $(f^n,\omega^n)$ be the MOY graph obtained from $(f,\omega)$ by replacing each edge with $n$ parallel copies of the edge with the same orientation and weight as the original edge.
Then there is a natural isomorphism $A_n\colon HF^-(f,\omega)\cong HF^-(f^n,\omega^n)$, where $A_n$ is a multiplication of Alexander grading by $n$.
\end{prop}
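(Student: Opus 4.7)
The plan is to deduce the proposition directly from Theorem \ref{thm:main2} and Lemma \ref{lem:ntimes}. In $f^n$, each edge $e$ of $f$ appears as $n$ parallel copies $e^{(1)},\dots,e^{(n)}$, all carrying weight $\omega(e)$ and identical orientation. Since Theorem \ref{thm:main2} does not require the two edges being merged to carry equal weights, I would first merge $e^{(1)}$ with $e^{(2)}$ into a single edge of weight $2\omega(e)$, then merge that edge with $e^{(3)}$ to produce weight $3\omega(e)$, and continue. After $n-1$ such moves, all $n$ copies of $e$ collapse into a single edge of weight $n\omega(e)$. Performing this reduction one edge at a time across all of $f$ transforms $(f^n,\omega^n)$ into the MOY graph $(f,\omega_n)$ of Lemma \ref{lem:ntimes}.

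Each individual merge yields, by Theorem \ref{thm:main2}, an isomorphism of $HF^-$ as absolute Maslov graded, relative Alexander graded $\mathbb{F}[U_1,\dots,U_V]$-modules. Composing all of them produces an isomorphism
\[
HF^-(f^n,\omega^n)\cong HF^-(f,\omega_n).
\]
Lemma \ref{lem:ntimes} then supplies the isomorphism $A_n\colon HF^-(f,\omega)\cong HF^-(f,\omega_n)$ that multiplies the Alexander grading by $n$ and preserves the Maslov grading. Composing $A_n$ with the inverse of the merging isomorphism delivers the claimed natural isomorphism $HF^-(f,\omega)\cong HF^-(f^n,\omega^n)$, with the correct grading behavior.

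The only subtlety is a bookkeeping one: at every intermediate stage the modified graph must remain a transverse MOY graph so that Theorem \ref{thm:main2} applies. This is automatic, since uniting two parallel edges of identical orientation affects neither the transverse disk data at either endpoint nor the balancing identity $\sum_{e\in\mathrm{In}(v)}\omega(e)=\sum_{e\in\mathrm{Out}(v)}\omega(e)$: the weight of the replacement edge is exactly the sum of the two weights being combined. Beyond this verification no new geometric or analytic input is required.
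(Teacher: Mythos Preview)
Your proof is correct and follows exactly the route the paper takes: it simply says ``Combine Theorem \ref{thm:main2} and Lemma \ref{lem:ntimes}.'' You have merely spelled out the details of how the repeated applications of Theorem \ref{thm:main2} reduce $(f^n,\omega^n)$ to $(f,\omega_n)$ before invoking Lemma \ref{lem:ntimes}.
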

\begin{proof}
Combine Theorem \ref{thm:main2} and Lemma \ref{lem:ntimes}.
\end{proof}
This Proposition is the partial extension of \cite[Theorem 4.5]{coloring}.

\section{Further problems}
Bao and Wu \cite{MOY-alexander} researched an Alexander polynomial for MOY graphs.
They defined Alexander polynomial using Kauffman state sum $\langle D,\omega\rangle$ for a given graph diagram $D$ of $f$ with balanced coloring $\omega$.
They summarized that the state sum $\langle D,\omega\rangle$ is an elaboration of the Alexander polynomial of Harvey and O'Donnol.
In their paper \cite{MOY-alexander}, Bao and Wu defined the Alexander polynomial as $\langle D,\omega\rangle/(t^{-1/2}-t^{1/2})^{V-1}$, where $V$ is the number of vertices of the graph.
Their Alexander polynomial of MOY graphs satisfies some relations that are proposed by Murakami-Ohtsuki-Yamada \cite{MOY-origin}.
So the natural question is whether these relations are shown in the framework of grid homology. 

Balanced spatial graphs researched by Vance\cite{grid-tau} and Kubota\cite{grid-upsilon-concordance} is a special case of MOY graphs.
They defined the tau invariant and the Upsilon invariant for balanced spatial graphs respectively.
By the definitions, it is straightforward that the tau invariant and the Upsilon invariant are extended to MOY graphs.
Then the question is what information about MOY graphs do these invariants contain?

\section{Acknowledgement}
I am grateful to my supervisor, Tetsuya Ito, for helpful discussions and corrections.
This work was supported by JST, the establishment of university fellowships towards
the creation of science technology innovation, Grant Number JPMJFS2123.

\bibliography{grid}
\bibliographystyle{amsplain} 

\end{document}